\numberwithin{equation}{section}
\newtheorem{theorem}{Theorem}[section]
\newtheorem{corollary}[theorem]{Corollary}
\newtheorem{proposition}[theorem]{Proposition}
\theoremstyle{definition}
\newtheorem{assume}{Assumption}
\theoremstyle{remark}
\newtheorem{remark}[theorem]{Remark}
\DeclareMathOperator*{\esssup}{ess\,sup}
\newcommand{\R}{{\mathbb{R}}}
\newcommand{\X}{{\mathbb{R}^d}}
\newcommand{\N}{\mathbb{N}}
\renewcommand{\L}{\mathcal{L}}
\newcommand{\eps}{\varepsilon}
\newcommand{\la}{\lambda}
\newcommand{\La}{\Lambda}
\renewcommand{\a}{\alpha}
\newcommand{\Tau}{\Upsilon}
\newcommand{\al}{\underline{\a}}
\newcommand{\au}{\overline{\a}}
\newcommand{\B}{{\mathcal{B}}}
\newcommand{\K}{{\mathcal{K}}}
\newcommand{\M}{{\mathcal{M}}}
\newcommand{\Mfm}{\M^1_{\mathrm{fm}}(\Ga)}
\newcommand{\Bb}{{\B_{\mathrm{b}}(\X)}}
\newcommand{\Ga}{\Gamma}
\newcommand{\ga}{\gamma}
\newcommand{\Bbs}{B_{\mathrm{bs}}(\Ga_0)}
\title{Around Ovsyannikov's method}
\author{Dmitri Finkelshtein\thanks{Department of Mathematics,
Swansea University, Singleton Park, Swansea SA2 8PP, U.K.; Institute of Mathematics, Kiev, 01601, Ukraine ({\tt d.l.finkelshtein@swansea.ac.uk}).}}
\begin{document}

\maketitle

\begin{abstract}
We study existence, uniqueness, and a limiting behaviour of solutions to an abstract linear evolution equation in a scale of Banach spaces. The generator of the equation is a perturbation of the operator which satisfies the classical assumptions of Ovsyannikov's method by a generator of a $C_0$-semigroup acting in each of the spaces of the scale. The results are (slightly modified) abstract version of those considered in \cite{FKKozK2014} for a particular equation. An application to a birth-and-death stochastic dynamics in the continuum is considered.

\textbf{Keywords:} Ovsyannikov's method, scale of spaces, evolution equations, birth-and-death dynamics, Vlasov scaling, kinetic equation.

\textbf{AMS Subject Classification:} Primary 35K90, 47D06; Secondary 82C22, 60J80
\end{abstract}


\section{Introduction}
The study of Markov evolutions for distributions of points in Euclidean space may be reduced to the study of the evolution equations
\begin{equation}\label{abstreq}
  \frac{d}{dt} u(t)=Z u(t), \qquad u(0)=u_0,
\end{equation}
in the Fock-type spaces with weighted $L^1$- or $L^\infty$-norms with an (unbounded) linear operator $Z$, see, e.g., \cite{KKM2008,FKO2009,FKK2011a} and the references therein. The well-posedness of the initial value problem \eqref{abstreq} in a Banach space (existence, uniqueness, and continuous dependence on the initial value) requires the operator $Z$ being a generator of a strongly continuous ($C_0$-) semigroup of linear operators acting in this space. The semigroup approach to the evolution of the so-called birth-and-death dynamics of the distributions of points mentioned above was realized in, e.g., \cite{KKZ2006,KKM2008,FKK2009,FKK2011a}. In particular, the technical restrictions on the birth and death rates were introduced. However, for some important models these restrictions either were not satisfied, cf. \cite{FKKZ2014}, or required more strong assumptions on the birth and death rates (that is less interesting for applications), compare \cite{KKZ2006} and \cite{FKKoz2011} or \cite{FKK2009} and \cite{FKKozK2014}. Moreover, the dynamics with jumps were not covered by the semigroup approach at all, see, e.g., \cite{BKKK2011,BKK2013,FKKL2011b}.

To overcome these restrictions, the evolution \eqref{abstreq} was allowed to be considered in a scale of Banach spaces. In the latter approach the dynamics was constructed on a finite time interval $[0,T)$ only, and, for any $t\in(0,T)$, the solution $u(t)$ to \eqref{abstreq} belonged to a proper space of the scale. This was realized in \cite{FKKoz2011, BKKK2011,FKO2011a,FKO2011b,FKKO2014} using the so-called Ovsyannikov's method: it requires $Z$ being considered as a bounded operator between {\it each} two spaces $B_{\a'}\subset B_{\a''}$, of a scale $\{B_\a\}_{\a\in I}$, $I\subset \R$, of Banach spaces with the operator norm proportional to $(\a''-\a')^{-1}$. Originally this method was found by G.\,E.\,Shilov and A.\,G.\,Kostyuchenko, and it was firstly published in 1958 in the book \cite{GS1958} by I.\,M.\,Gelfand and G.\,E.\,Shilov. In particular, it was applied to the so-called Kovalevskya's system of first-order PDE. In 1960, T.\,Yamanaka generalized this approach for the case of the time dependent operator $Z(t)$. In 1965, the method was rediscovered in \cite{Ovs1965} by L.\,V.\,Ovsjannikov and was named in the book \cite{Tre1968} written by F.\,Tr\`{e}ves, who has realized the detailed analysis of the problem
\begin{equation}\label{abstreq1}
  \frac{d}{dt} u(t)=Z(t) u(t)+f(t), \qquad u(s)=u_s, \ s\geq0,
\end{equation}
with numerous applications. After that, the initial value problems \eqref{abstreq}, \eqref{abstreq1} with the operator norm estimate as above were named the `abstract Kovalevskya's systems' in the literature. (Note that the method allowed the immediate generalization for the complex values of~$t$.) The non-linear generalization was introduced by F.\,Tr\`{e}ves~\cite{Tre1970} under assumptions which were essentially simplified by L.\,Nirenberg~\cite{Nir1972} and T.\,Nishida \cite{Nis1977}. K.\,Deimling generalized the linear case to the equation
\begin{equation}\label{abstreq2}
  \frac{d}{dt} u(t)=Z(t) u(t)+f(t,u(t)), \qquad u(s)=u_s, \ s\geq0,
\end{equation}
for a some class of functions $f$. Being quite general, the conditions on $f$ guaranteed the existence of a solution to \eqref{abstreq1} only, cf., e.g., the survey~\cite{LS1994}.  For further generalizations see, e.g., \cite{Ben1982,Caf1990,Saf1995,Zub2004}.

On the other hand, in \cite{FKKozK2014}, there were considered the birth-and-death dynamics of complex (point) systems in the continuum whose generator of the evolution of correlation functions (see Section~\ref{sec:appl} below for the definitions) did not allow the singularity $(\a''-\a')^{-1}$ for the norm of $Z$ in \eqref{abstreq}. Namely, the singularity was of the order $(\a''-\a')^{-2}$ which can not be realized in the scheme of Ovsyannikov's method. Fortunately, the corresponding evolution equation could be rewritten in the from
\begin{equation}\label{abstreq3}
  \frac{d}{dt} u(t)=Z u(t)+Au(t), \qquad u(0)=u_0,
\end{equation}
where $Z$ was suitable for Ovsyannikov's method and $A$ was a generator of a contraction semigroup acting in {\it each} of the spaces of the scale. The important point is that the evolution of correlation functions was considered in the scale of $L^\infty$-type spaces (see, e.g., \cite{FKK2011a,FKK2014} for the explanation of the reasons). By \cite{Lot1985}, there is not a $C_0$-semigroup in a space isomorphic to $L^\infty$ with an unbounded generator (see also the proof of Proposition~\ref{existdyn} below). Therefore, the semigroups generated by $A$ were considered in the suitable subspaces of the spaces of the scale, using the technique of the so-called sun-dual semigroups, see e.g. \cite{EN2000,vNee1992}, which goes back to R.\,S.\,Phillips \cite{Phi1955}.
Another problem considered in \cite{FKKozK2014} was the convergence of the solutions to the equation
\begin{equation}\label{abstreq3-eps}
  \frac{d}{dt} u_\eps(t)=Z u_\eps(t)+Au_\eps(t), \qquad u(0)=u_0,
\end{equation}
to the solution to the limiting equation, in the course of the so-called Vlasov-type scaling, see, e.g., \cite{FKK2010a,FKK2011a} for details. This allowed to derive the so-called kinetic equation which approximately describes the behaviour of the density of the birth-and-death dynamics. All constructions in \cite{FKKozK2014} were done for the particular model, however, in the form useful for further generalization.

The aims of the present paper are the following. First, we consider the abstract equation \eqref{abstreq3} in an (increasing) scale of Banach spaces. The assumptions on $A$ and $Z$  will be abstract and slightly generalized versions of those in \cite{FKKozK2014}. For this equation we prove the existence result (Theorem~\ref{exist}) and show the uniqueness of the `integral curves' of the differential equation (Theorem~\ref{unique}). As was mentioned above, for any $t\in(0,T)$, the solution to \eqref{abstreq3} belongs to a space of the scale, more precisely, $u(t)\in B_\a$, for all $\a>\a_t$, for some `minimal' $\a_t$. In general, one can not check whether $u(t)\in B_{\a_t}$. Therefore, it is quite natural to consider \eqref{abstreq3} in the scale of projective limits $\bigl\{\bigcap_{\beta>\a} B_\beta\bigr\}_{\a\in I}$, $I\subset\R$ (Proposition~\ref{localisation}). This allows to prove rigorously that the flow $t\mapsto u(t)$ is indeed a unique and satisfies the semigroup property; surely, on a finite time interval only (Proposition~\ref{propag}). The problem here was that, in contrast to the classical existence and uniqueness Picard--Lindel\"{o}f theorem for ordinary differential equations, where the finite time interval was defined by the estimate on the right hand side of the equation, in Ovsyannikov's methods we should {\it choose} the initial and the terminal spaces, and they generate the time interval. As a result, we need to be sure that by choosing more wider terminal space we will have the same solution as in the smaller terminal space (with the same initial space), see also Remark~\ref{optimaltime} below. Also one should verify that, by choosing the properly small intermediate moment of time, we could start a new initial value problem at that time and get the same solution thereafter as if we would consider the first problem on a bigger time interval.

The second aim is to consider the convergence of the solutions to the abstract equations \eqref{abstreq3-eps}. For the case $A_\eps=0$, $\eps>0$, the corresponding abstract statement was proved in \cite{FKO2011a}. In Theorem~\ref{thm:conv}, we generalize this scheme by a simple modification of the statements presented in \cite{FKKozK2014}, for the particular model discussed there.

Finally, in Section~\ref{sec:appl}, we consider birth-and-death dynamics using the technique above. The dynamics are a `combination' of those studied in \cite{FK2009} and \cite{FKKZ2014}; it describes the evolution in the course of which the elements disappear (die) more intensively in that regions of the space, where the amounts of their `neighbours' are too big or too small. The birth rate of the dynamics is assumed to be constant in the space: the elements appear from the outside `reservoir' of the system. We prove that the corresponding dynamics exist on a finite time interval and realize the Vlasov-type scaling. This yields a nonlocal nonlinear kinetic equation of a new type. In particular, this equation may have one or three positive stationary points depending on the values of parameters of the system, see Remark~\ref{statpoints} below. The detailed  analysis of this equation will be done in a sequel paper.

The author is thankful to Prof. Dr. Yuri Kondratiev, Prof. Dr. Yuri Kozitsky, and Dr. Oleksandr Kutoviy for fruitful discussions.

\section{Main results}

Let us fix several arrangements. We will use notations like $(B,\lVert\cdot\rVert)$ to say that $B$ is a Banach space with a norm $\lVert\cdot\rVert$. Let $A$ be an (unbounded) linear operator on $B$ with a domain $D\subset B$, we will denote this by $(A,D)$. Note that here and below any inclusion allows equality.
A strongly continuous semigroup of linear bounded operators $S(t)$, $t\geq0$, on $B$ will be called a $C_0$-semigroup. By e.g. \cite[Proposition~I.5.5]{EN2000}, there exist $\omega\in\R$ and $\nu\geq1$ such that $\lVert S(t)\rVert\leq \nu e^{\omega t}$, $t\geq0$, i.e. $S(t)$ is exponentially bounded. If $C$ is a linear subset of $B$, we will denote the closure of $C$ with respect to the norm of $B$ by $\overline{C}$. Note that if $C$ is closed, i.e. $\overline{C}=C$, then $(C,\lVert\cdot\rVert_B)$ is a Banach space as well.

Our first assumption is about a scale of Banach spaces where the dynamics will exist.
\begin{assume}\label{as:scale}
Let $\al>0$, $\au\in(0,\infty]$ be fixed; set $I=(\al,\au)$. Let $\mathbf{B}:=(B_\a)_{\a\in I}$ be a family of Banach spaces $(B_\a,\lVert\cdot\rVert_\a)$ which is supposed to be increasing, i.e.
\begin{equation}\label{incrfamily}
  B_{\a'}\subset B_{\a''}, \qquad \lVert\cdot\rVert_{\a'}\geq\lVert\cdot\rVert_{\a''}, \qquad \a'\leq\a'', \quad \a',\a''\in I.
\end{equation}
Suppose, additionally, that, $u\in B_{\a'}\subset B_{\a''}$, $u=0$ in $B_{\a''}$ yields $u=0$ in $B_{\a'}$, for any $\a',\a''$ as above.
\end{assume}

Next, we consider a linear mapping $A$ on $B_I:=\bigcup_{\a\in I} B_\a$ which acts in each of spaces of the scale $\mathbf{B}$ and satisfies the following assumption.
\begin{assume}\label{as:semgen}
Let, for any $\a\in I$, in the space $B_\a$, there exist a closed linear subset $C_\a$ and its dense linear subset $D_\a$, i.e. $D_\a\subset C_\a\subset B_\a$ and $\overline{D_\a}=C_\a=\overline{C_\a}$. Let $A:B_I\to B_I$ be linear operators and such that, for any $\a\in I$, the operator $(A,D_\a)$ is a generator of a $C_0$-semigroup $S_\a(t)$ on the Banach space $(C_\a,\lVert\cdot\rVert_\a)$. Assume that, for any $\a'\in I$ with $\a'<\a$, $B_{\a'}\subset D_\a$; $B_{\a'}$ is $S_\a(t)$-invariant, and $S_\a(t)\upharpoonright_{B_{\a'}}=S_{\a'}(t)$. Suppose also that the constants $\nu\geq1$, $\omega\in\R$, in the definition of quasi-boundedness for $S_\a(t)$ are independent in $\a$, i.e.
\begin{equation}\label{uniqbdd}
  \lVert S_\a(t)\rVert\leq \nu e^{\omega t}, \qquad t\geq0, \a\in I.
\end{equation}
\end{assume}

Finally, we will deal with a linear mapping $Z$ on $B_I$ which may be considered as a bounded operator between each two spaces of the scale $\mathbf{B}$.
\begin{assume}\label{as:Ovsgen}
Let $M,N: I\to (0,\infty)$ be increasing continuous functions. Let, for any $\a^*\in I$ and for any $\a',\a''\in (\al,\a^*]$ with $\a'<\a''$, $Z$ be a bounded linear operator from $B_{\a'}$ to $B_{\a''}$, such that the following estimate holds:
\begin{equation}\label{singnorm}
\lVert Z u \rVert_{\a''} \leq \Bigl( \frac{M(\a^*)}{\a''-\a'}+N(\a^*)\Bigr) \lVert u\rVert_{\a'}, \quad u\in\ B_{\a'}.
\end{equation}
(Note that $M,N$ may depend on $\al$.)
\end{assume}

Under Assumptions above, consider the following function
\begin{equation}\label{timeint}
T(\a,\beta):=\dfrac{\beta-\a}{e \nu M(\beta)}, \qquad \beta\geq\a>\al.
\end{equation}

\begin{theorem}\label{exist}
Let Assumptions \ref{as:scale}--\ref{as:Ovsgen} hold. Let $\a^*\in I$ and $s\geq0$. Take an arbitrary $\a_s\in (\al,\a^*)$ and set $T:=T(\a_s,\a^*)$. Then, for any $u_s\in B_{\a_s}$, there exists a function $u:[s,s+T)\to B_{\a^*}$ such that
\begin{enumerate}
\item $u$ is continuous on $[s,s+T)$ and continuously differentiable on $(s,s+T)$;
\item for any $t\in(s,s+T)$, $Au(t)\in B_{\a^*}$ and $Zu(t)\in B_{\a^*}$;
\item $u$ solves the following differential equation:
\begin{equation}\label{IVP}
\dfrac{d}{dt} u(t)=Au(t)+Zu(t), \quad t\in(s,s+T),
\end{equation}
\item $u(s)=u_s$.
\end{enumerate}
\end{theorem}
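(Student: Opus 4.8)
The plan is to construct the solution as an Ovsyannikov-type series in which each term is ``dressed'' by the semigroup generated by $A$. Concretely, I would look for $u(t)$ in the form of the Duhamel/Dyson expansion
\begin{equation*}
u(t)=\sum_{n=0}^\infty u_n(t),\qquad u_0(t)=S_{\a^*}(t-s)u_s,\qquad u_{n+1}(t)=\int_s^t S_{\a^*}(t-\tau)\,Z\,u_n(\tau)\,d\tau .
\end{equation*}
To make sense of this I first fix an auxiliary strictly increasing sequence of indices: for $\a_s<\a^*$ and $T=T(\a_s,\a^*)$, set $\a^{(n)}_t:=\a_s+\frac{t-s}{T}\bigl(\a^*-\a_s\bigr)\cdot\frac{n}{n+1}$ at ``level'' $n$ along the way from $\a_s$ to $\a^*$ (the standard Ovsyannikov trick of slicing the gap $\a^*-\a_s$ into infinitely many shrinking pieces, simultaneously with slicing $[s,t]$). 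Then $u_n(t)\in B_{\a^{(n)}_t}\subset B_{\a^*}$, using Assumption~\ref{as:semgen} for the action of $S_{\a^*}$ (via $S_{\a^*}\!\upharpoonright_{B_{\a'}}=S_{\a'}$ and the invariance/inclusion $B_{\a'}\subset D_{\a^*}$, which also makes $S_{\a^*}(\cdot)u$ differentiable with derivative $AS_{\a^*}(\cdot)u$ for $u$ in a smaller space), Assumption~\ref{as:Ovsgen} for the action of $Z$ between nested spaces, and the uniform bound \eqref{uniqbdd}.

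The heart of the argument is the norm estimate. Pulling the factor $\nu e^{\omega(t-\tau)}$ out of the semigroup and the factor $\frac{M(\a^*)}{\a^{(n+1)}_\tau-\a^{(n)}_\tau}+N(\a^*)$ out of $Z$ at each step, an induction gives a bound of the shape
\begin{equation*}
\lVert u_n(t)\rVert_{\a^*}\le \lVert u_n(t)\rVert_{\a^{(n)}_t}\le \nu e^{\omega(t-s)}\,\lVert u_s\rVert_{\a_s}\cdot \frac{1}{n!}\Bigl(\frac{e\,\nu\,M(\a^*)\,(t-s)}{\a^*-\a_s}\Bigr)^{\!n}\cdot C(t)^n,
\end{equation*}
where the combinatorial $\frac1{n!}$ comes from the $n$-fold time integral against the telescoping widths $\a^{(k+1)}-\a^{(k)}$ (this is precisely where the choice $T(\a,\beta)=\tfrac{\beta-\a}{e\nu M}$ and the elementary inequality $\bigl(1+\tfrac1k\bigr)^k<e$ enter), and $C(t)$ is a harmless bounded correction absorbing the lower-order term $N(\a^*)$ and the $e^{\omega(t-\tau)}$ factors over the compact interval. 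Hence for $t-s<T$ the series converges absolutely and uniformly on compact subintervals in the norm of $B_{\a^*}$, so $u$ is well defined, continuous on $[s,s+T)$, and $u(s)=u_0(s)=u_s$.

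It remains to verify items (1)–(3): that $u$ is $C^1$ on $(s,s+T)$, that $Au(t),Zu(t)\in B_{\a^*}$, and that $\frac{d}{dt}u(t)=Au(t)+Zu(t)$. For this I would fix $t_0\in(s,s+T)$, pick an intermediate index $\a_s<\a'<\a^*$ with $t_0-s<T(\a',\a^*)$ still (possible since $T(\cdot,\a^*)$ is continuous), and re-run the same construction with terminal space $B_{\a'}$ on $[s,s+T(\a',\a^*))$; the uniqueness built into the telescoping series (two series with the same $u_0$ and the same recursion coincide) shows this produces the same function $u$, now landing in $B_{\a'}$ with $\a'<\a^*$. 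Then $Z u(t)\in B_{\a^*}$ by Assumption~\ref{as:Ovsgen} and $u(t)\in B_{\a'}\subset D_{\a^*}$ gives $Au(t)\in B_{\a^*}$ by Assumption~\ref{as:semgen}. Differentiating the Duhamel identity $u(t)=S_{\a^*}(t-s)u_s+\int_s^t S_{\a^*}(t-\tau)Zu(\tau)\,d\tau$ termwise — legitimate because, with $u_s$ (resp. $Zu(\tau)$) lying in a space strictly below $\a^*$, $\tau\mapsto S_{\a^*}(\tau)(\cdot)$ is differentiable with derivative $AS_{\a^*}(\tau)(\cdot)$, and the perturbation series for $\frac{d}{dt}u$ enjoys the same convergence estimates — yields $\frac{d}{dt}u(t)=AS_{\a^*}(t-s)u_s+Zu(t)+\int_s^t AS_{\a^*}(t-\tau)Zu(\tau)\,d\tau = A u(t)+Z u(t)$, and continuity of the right-hand side gives $u\in C^1$.

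The main obstacle I expect is the differentiability/regularity step, not the convergence: one must justify moving $A$ inside the Duhamel integral and the infinite sum, which forces the bookkeeping of indices so that at every stage the relevant argument of $S_{\a^*}$ sits in a strictly smaller space $B_{\a'}\subset D_{\a^*}$ (this is exactly why the scale, and the compatibility $S_{\a^*}\!\upharpoonright_{B_{\a'}}=S_{\a'}$ together with $B_{\a'}\subset D_{\a^*}$, are assumed), and it is also why the statement only asserts $u(t)\in B_{\a^*}$ rather than membership in the minimal space. Everything else is the classical Ovsyannikov estimate with the extra uniformly bounded semigroup factors carried along.
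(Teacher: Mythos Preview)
Your overall strategy --- the Dyson/Duhamel series for the mild equation, with Ovsyannikov-type bookkeeping of indices, followed by differentiation after relocating the series in a strictly smaller space --- is exactly the paper's. But the partition you write down does not give a convergent series. With the single increasing sequence $\alpha^{(n)}_t = \alpha_s + \frac{t-s}{T}(\alpha^*-\alpha_s)\frac{n}{n+1}$, consecutive gaps shrink like $\frac{1}{(n+1)(n+2)}$, so each application of $Z$ costs a factor of order $n^2$; after $n$ steps you accumulate roughly $(n!)^2$, which the $1/n!$ from the time simplex cannot compensate. (Your displayed bound $\tfrac{1}{n!}\bigl(\tfrac{e\nu M(\alpha^*)(t-s)}{\alpha^*-\alpha_s}\bigr)^{n}$ is symptomatic: taken literally it would sum for \emph{every} $t$, not only $t-s<T$.) The correct device, and the one the paper uses, is to choose for each fixed $n$ a \emph{fresh} equipartition of an interval $[\alpha_s,\alpha]$ --- with $\alpha\in(\alpha_s,\alpha^*)$ picked so that a given $\Upsilon\in(0,T)$ satisfies $\Upsilon<T(\alpha_s,\alpha)$ --- into about $n$ pieces of width $\delta\sim(\alpha-\alpha_s)/n$. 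Then each $Z$ costs $\sim nM(\alpha)/(\alpha-\alpha_s)$, the product over $n$ steps is $\sim n^n$, and together with $(t-s)^n/n!$ and $n!\ge (n/e)^n$ (equivalently, your $(1+1/k)^k<e$) one obtains the genuinely geometric majorant $\bigl((t-s)/T(\alpha_s,\alpha)\bigr)^n$, convergent precisely for $t-s<T(\alpha_s,\alpha)$.

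There is also a slip in the differentiability step: to place $u(t)$ in $B_{\alpha'}$ with $\alpha'<\alpha^*$ you need $t-s<T(\alpha_s,\alpha')$ (smaller \emph{terminal} space, same initial space), not $t-s<T(\alpha',\alpha^*)$. In the paper this is handled up front: fix $\Upsilon<T$, pick $\alpha\in(\alpha_s,\alpha^*)$ with $\Upsilon<T(\alpha_s,\alpha)$, and build the entire series already in $B_\alpha$; every term then lies in $B_\alpha\subset D_{\alpha^*}$, which is what licenses the termwise identity $\tfrac{d}{dt}V^{(n)}=ZV^{(n-1)}+AV^{(n)}$ in $B_{\alpha^*}$.
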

\begin{proof}
We will follow the scheme from \cite{FKKozK2014}.
Take arbitrary $\Tau\in(0,T)$. By the continuity of $M$, there exists $\a\in(\a_s,\a^*)$ such that $\Tau <T(\a_s,\a)=:T'$. Let $q=q(\Tau,T,T')>1$ be such that $q\Tau<\min\{T,T'\}$. For any $n\in\N$, consider the following partition of the interval $[\a_s,\a]$ on $2n+2$ parts:
\begin{equation}\label{partition}
  \a^{(2j,n)}=\a_s+j(\delta_1+\delta_2), \quad  \a^{(2j+1,n)}=\a^{(2j,n)}+\delta_1,
\end{equation}
where $j=0,\ldots,n$ and
\begin{equation}\label{deltas}
  \delta_1=\frac{(q-1)(\a-\a_s)}{q(n+1)}, \qquad \delta_2=\frac{\a-\a_s}{qn}.
\end{equation}
In particular, $\a^{(0,n)}=\a_s$, $\a^{(2n+1,n)}=\a$, and
 \begin{equation}\label{dopest1}
   \a^{(2j+2,n)}-\a^{(2j+1,n)}=\delta_2, \quad j=0,\ldots,n-1.
\end{equation}
For an $n\in\N$, consider  a mapping on $B_I$
\begin{equation}\label{kernelU}
  U_\a^{(n)}(t,t_1,\ldots,t_n)=S_{\a}(t-t_1)ZS_{\a}(t_1-t_2)Z\ldots S_{\a}(t_{n-1}-t_n)ZS_{\a}(t_n),
\end{equation}
where we set $t_0:=t$.
Then, for any $\tau>0$ and $u\in B_{\a^{(2j,n)}}\subset D_{\a^{(2j+1,n)}}\subset C_{\a^{(2j+1,n)}}$, one has $S_{\a}(\tau)u=S_{\a^{(2j,n)}}(\tau)u\in B_{\a^{(2j+1,n)}}$ and hence $ZS_{\a}(\tau)u\in B_{\a^{(2j+2,n)}}$, $j=0,\ldots,n-1$, with
\[
\lVert ZS_{\a}(\tau)u \rVert_{\a^{(2j+2,n)}}\leq \nu e^{\omega \tau}
\Bigl( \frac{M(\a)}{\delta_2}+N(\a)\Bigr) \lVert u\rVert_{\a^{(2j,n)}}
\]
As a result, $U_\a^{(n)}(t,t_1,\ldots,t_n)u_s\in B_{\a}$ with
\begin{align}
  \lVert U_\a^{(n)}(t,t_1,\ldots,t_n)u_s \rVert_{\a}&\leq \nu^{n+1} e^{\omega t}\Bigl( \frac{M(\a)}{\delta_2}+N(\a)\Bigr)^n \lVert u\rVert_{\a_s}\notag\\
  &= \nu e^{\omega t}\Bigl( \frac{qn}{e T'}+\nu N(\a)\Bigr)^n \lVert u\rVert_{\a_s}. \label{estforUn}
\end{align}

Set now $V_\a^{(1)}(s,t)u_s:=\int_s^t U^{(1)}(t,t_1)u_s\,dt_1$  and
\[
V_\a^{(n)}(s,t)u_s:=\int_s^t\int_s^{t_1}\ldots\int_s^{t_{n-1}} U_\a^{(n)}(t,t_1,\ldots,t_n)u_s\,dt_{n}\ldots dt_1.
\]
Therefore, the series
\begin{equation}\label{series}
  S_{\a}(t-s) u_s +\sum_{n=1}^\infty V_\a^{(n)}(s,t)u_s
\end{equation}
is majorized in $B_{\a}$ by the series
\begin{equation}\label{majser}
  \nu e^{\omega t}\lVert u_s\rVert_{\a_s} \sum_{n=0}^\infty \frac{1}{n!}\Bigl( \frac{qn}{e T'}+\nu N(\a)\Bigr)^n (t-s)^n,
\end{equation}
which converges uniformly on $t\in[s,s+\Tau]$, as
\begin{multline}
\biggl(\frac{1}{n!}\Bigl( \frac{qn}{e T'}+\nu N(\a)\Bigr)^n (t-s)^n\biggr)^{\frac{1}{n}}\\ \sim \frac{e}{n}\Bigl( \frac{qn}{e T'}+\nu N(\a)\Bigr)(t-s)\sim \frac{q(t-s)}{T'}\leq\frac{q\Tau}{T'}<1.\label{sim}
\end{multline}
Therefore, the series \eqref{series} converges uniformly on $t\in[s,s+\Tau]$ in $B_{\a}$  to a function $u(t)\in B_{\a}$. Evidently, each term of \eqref{series} is continuous as a mapping $[s,s+\Tau]\to B_{\a}$ thus $u(t)$ is continuous as well. Since the norm in $B_\a$ is stronger  than in $B_{\a^*}$ one has that $[s,s+\Tau]\ni t\mapsto u(t)\to B_{\a^*}$ is also continuous.

Consider now the series of derivatives of the terms from \eqref{series}. Each of them belongs to $B_\a\subset D_{\a^*}$, thus
\begin{align}
  \frac{d}{dt}V^{(n)}_\a(s,t)
&=\int_s^{t} \int_s^{t_2} \ldots \int_s^{t_{n-1}} U^{(n)}(t,t,t_2\ldots,t_n)u_s\,dt_{n}\ldots dt_1\notag\\&\quad+\int_s^{t} \int_s^{t_1} \ldots \int_s^{t_{n-1}} AU_\a^{(n)}(t,t_1,\ldots,t_n)u_s\,dt_{n}\ldots dt_1 \notag\\
&=Z V^{(n-1)}_\a(s,t)+AV^{(n)}_\a(s,t)\label{sda}
\end{align}
is well-defined and belongs to $B_{\a^*}$. By the same arguments as above the series of derivatives converges uniformly on $t\in[s,s+\Tau]$ in $B_{\a^*}$ and hence its sum is equal to $\frac{d}{dt}u(t)$. Note also that $u(t)\in B_\a\subset D_{\a^*} $ and hence $Au, Zu\in B_{\a^*}$. Thus, by \eqref{sda}, $u(t)$ satisfies \eqref{IVP}. Since $\Tau\in (0,T)$ was arbitrary, the statement is proved.
\end{proof}

\begin{remark}
  It is easy to see that the summand $N(\a^*)$ in \eqref{singnorm} might be changed on $\dfrac{N(\a^*)}{(\a''-\a')^\delta}$, for an arbitrary $\delta\in(0,1)$, without any changes in \eqref{timeint}.
\end{remark}

\begin{corollary}\label{cor:est}
Let conditions and notations of Theorem~\ref{exist} hold. Set
\[
N_*:=\sup_{\a\in[\a_s,\a^*]} N(\a)<\infty, \qquad
T_*:=\sup_{\a\in[\a_s,\a^*]} T(\a_s,a)<\infty.
\]
Then, for any $\Tau\in(0,T)$ and $\a\in(\a_s,\a^*)$ such that $\Tau<T(\a_s,\a)=:T'$, and for any $q\in \bigl(1,\frac{T'}{\Tau}\bigr)$,
\begin{equation}\label{est}
  \lVert u(t)\rVert_\a\leq \frac{C }{T'-q\Tau}e^{\omega t} \lVert u_s\rVert_{\a_s}, \qquad t\in[s,s+\Tau],
\end{equation}
  where $C=C(\nu,T_*,N_*)>0$.
\end{corollary}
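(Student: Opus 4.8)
The plan is to extract an explicit, uniform-in-$\a$ bound from the majorizing series \eqref{majser} that already appeared in the proof of Theorem~\ref{exist}. Fix $\Tau\in(0,T)$, pick $\a\in(\a_s,\a^*)$ with $\Tau<T(\a_s,\a)=:T'$, and fix $q\in\bigl(1,\frac{T'}{\Tau}\bigr)$. From the uniform convergence established in Theorem~\ref{exist}, for every $t\in[s,s+\Tau]$ we have the pointwise bound
\begin{equation*}
  \lVert u(t)\rVert_\a \leq \nu e^{\omega t}\lVert u_s\rVert_{\a_s}\sum_{n=0}^\infty \frac{1}{n!}\Bigl(\frac{qn}{eT'}+\nu N(\a)\Bigr)^n (t-s)^n,
\end{equation*}
and since $t-s\leq\Tau$ and $N(\a)\leq N_*$ (because $\a\leq\a^*$), the series is dominated term-by-term by $\sum_{n=0}^\infty \frac{1}{n!}\bigl(\frac{qn}{eT'}+\nu N_*\bigr)^n \Tau^n$. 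So the whole task reduces to bounding this last numerical series by $\frac{C'}{T'-q\Tau}$ for a constant $C'$ depending only on $\nu$, $N_*$, and $T_*$ (note $T'\leq T_*$, which will be needed to absorb the $\nu N_*\Tau$ contributions into a uniform constant).

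Next I would estimate the series. Split $\bigl(\frac{qn}{eT'}+\nu N_*\bigr)^n$ by the binomial-type inequality $(a+b)^n\leq 2^n(a^n+b^n)$, or more simply just bound each factor: write the general term as $\frac{1}{n!}\bigl(\frac{q\Tau}{T'}\cdot\frac{n}{e}+\nu N_*\Tau\bigr)^n$. Using Stirling in the form $\frac{n^n}{n!}\leq e^n$, one gets $\frac{1}{n!}\bigl(\frac{q\Tau}{T'}\bigr)^n\frac{n^n}{e^n}\leq \bigl(\frac{q\Tau}{T'}\bigr)^n$ for the leading piece, so after handling the cross terms one obtains a bound of the form $\sum_n r^n \cdot (\text{polynomial-free factor})$ with ratio $r=\frac{q\Tau}{T'}<1$; the geometric series then sums to something of order $\frac{1}{1-r}=\frac{T'}{T'-q\Tau}$. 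Since $T'\leq T_*$, the factor $T'$ in the numerator is absorbed into $C$; the exponential factor $e^{\nu N_*\Tau}\leq e^{\nu N_* T_*}$ coming from the $b=\nu N_*\Tau$ part is likewise a constant depending only on $\nu, N_*, T_*$. Collecting everything, $\lVert u(t)\rVert_\a\leq \frac{C}{T'-q\Tau}e^{\omega t}\lVert u_s\rVert_{\a_s}$ with $C=C(\nu,T_*,N_*)$, which is \eqref{est}.

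The main obstacle is purely bookkeeping: making the constant genuinely independent of $\a$ (hence of $T'$, which varies with $\a$) rather than just independent of $t$. The key observations that make this work are (i) $N(\a)\le N_*$ for all $\a\in[\a_s,\a^*]$, so the additive term is uniformly controlled; (ii) $T'=T(\a_s,\a)\le T_*$, so any stray factor of $T'$ in a numerator is harmless; and (iii) the only place $\a$ enters the decay rate is through $r=q\Tau/T'<1$, and the final $\frac{1}{1-r}$ is exactly what produces the claimed $\frac{1}{T'-q\Tau}$ singularity. One should double-check the $\frac{n^n}{n!}\le e^n$ step and the treatment of the mixed binomial terms, but these are elementary; no new analytic input beyond the estimates already present in the proof of Theorem~\ref{exist} is required.
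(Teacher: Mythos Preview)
Your overall strategy matches the paper's: start from the majorizing series \eqref{majser}, replace $N(\a)$ by $N_*$ and $t-s$ by $\Tau$, reduce to a geometric series in $r=q\Tau/T'$, and absorb stray numerators via $T'\le T_*$. The only substantive step is the one you leave vague---the ``cross terms''---and your first suggestion for it actually fails. The bound $(a+b)^n\le 2^n(a^n+b^n)$ with $a=\tfrac{qn\Tau}{eT'}$ inserts an extra $2^n$, so after Stirling the series has ratio $2q\Tau/T'$, which need not be $<1$ for arbitrary $q\in(1,T'/\Tau)$; thus the argument would not cover the full range of $q$ in the statement.

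The paper avoids splitting altogether and \emph{factors}:
\[
\frac{1}{n!}\Bigl(\frac{qn}{eT'}+\nu N(\a)\Bigr)^n\Tau^n
=\frac{1}{n!}\Bigl(\frac{n}{e}\Bigr)^n\Bigl(\frac{q\Tau}{T'}\Bigr)^n\Bigl(1+\frac{e\nu T' N(\a)}{qn}\Bigr)^n,
\]
then applies $n!\ge e\,(n/e)^n$ to the first two factors and $(1+x/n)^n\le e^x$ (with $x=e\nu T' N(\a)/q\le e\nu T_* N_*$) to the last. This gives the termwise bound $\tfrac{1}{e}\,e^{e\nu T_* N_*}\,(q\Tau/T')^n$, which sums to $\tfrac{1}{e}\,e^{e\nu T_* N_*}\,\tfrac{T'}{T'-q\Tau}\le \tfrac{1}{e}\,e^{e\nu T_* N_*}\,\tfrac{T_*}{T'-q\Tau}$, yielding $C=\nu\,T_*\,e^{e\nu T_* N_*-1}$. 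Note in particular that the exponential constant is $e^{e\nu T_* N_*}$, not the $e^{\nu N_*\Tau}$ you wrote; the latter would arise from $\sum b^n/n!$, which is not how the additive term enters here.
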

\begin{proof}
  By \eqref{majser}, we have, for $t\in[s,s+\Tau]$,
  \begin{align*}
    \lVert u(t)\rVert_\a & \leq \nu e^{\omega t}\lVert u_s\rVert_{\a_s} \sum_{n=0}^\infty \frac{1}{n!}\Bigl(\frac{n}{e }\Bigr)^n \Bigl(\frac{q\Tau}{T'}\Bigr)^n\Bigl( 1+\frac{e \nu T' N(\a)}{qn}\Bigr)^n , \\
    & \leq \nu e^{\omega t}\lVert u_s\rVert_{\a_s} \sum_{n=0}^\infty \frac{1}{e}\Bigl(\frac{q\Tau}{T'}\Bigr)^n \Bigl( 1+\frac{e \nu T_* N_*}{n}\Bigr)^n\\&
    \leq \nu e^{\omega t+e \nu T_* N_*-1}\lVert u_s\rVert_{\a_s} \frac{T_*}{T'-q \Tau}
  \end{align*}
  where we have used the following elementary inequalities
  \[
  n!\geq e \Bigl(\frac{n}{e}\Bigr)^n, \qquad \Bigl(1+\frac{x}{n}\Bigr)^n\leq e^x, \qquad n\in\N, x>0.\qedhere
  \]
\end{proof}
Now we are ready to formulate a uniqueness result.
\begin{theorem}\label{unique}
Let Assumptions \ref{as:scale}--\ref{as:Ovsgen} hold. Let $\a^*\in I$ and $T>0$. Let, for some $s\geq0$ and $\tau\in[s,s+T)$ continuous functions $[\tau,s+T)\to u_i(t)\in B_{\a^*}$, $i=1,2$ satisfy to the differential equation \eqref{IVP} on $(\tau,s+T)$ in $B_{\a^*}$. Suppose that there exists $\a_\tau\in (\al,\a^*)$ such that $\tau+T(\a_\tau,\a^*)\geq s+T$ and $u_1(\tau)=u_2(\tau)=:u_\tau \in B_{\a_\tau}$. Then $u_1(t)=u_2(t)$ in $B_{\a^*}$, for any $t\in(\tau,s+T)$.

In particular, the function $u$ in Theorem~\ref{exist} is unique.
\end{theorem}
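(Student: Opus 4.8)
The plan is to reduce the uniqueness statement to an estimate on the difference $w(t) := u_1(t) - u_2(t)$, which satisfies the homogeneous equation $\frac{d}{dt}w(t) = Aw(t) + Zw(t)$ on $(\tau, s+T)$ with $w(\tau) = 0$, and is continuous as a map into $B_{\a^*}$. The key idea is a Duhamel/variation-of-constants representation: since $A$ generates the $C_0$-semigroup $S_{\a^*}(t)$ on $C_{\a^*}$, any such solution satisfies $w(t) = \int_\tau^t S_{\a^*}(t-r) Z w(r)\, dr$ (the semigroup term drops out because $w(\tau)=0$). One then iterates this identity: substituting the representation into itself $n$ times expresses $w(t)$ as an $n$-fold iterated integral of $U_{\a^*}^{(n)}(t,t_1,\dots,t_n)$-type kernels applied to $w$, exactly mirroring the construction in the proof of Theorem~\ref{exist}, plus a remainder term involving $w$ itself integrated against the same kernels.

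The technical heart of the argument is to make this iteration rigorous despite the singularity in the norm of $Z$: one cannot apply $Z$ repeatedly within a single space. Here one would exploit the hypothesis $\tau + T(\a_\tau, \a^*) \geq s+T$ together with $w(\tau) = u_\tau \in B_{\a_\tau}$. Because $w$ solves the equation, an argument like the one proving Theorem~\ref{exist} shows that $w(r) \in B_\a$ for intermediate $\a$ and $r$ close to $\tau$; more robustly, one fixes a partition of $[\a_\tau, \a^*]$ as in \eqref{partition}--\eqref{deltas} and pushes the iterated Duhamel expansion through the chain of spaces $B_{\a^{(0,n)}} \subset B_{\a^{(1,n)}} \subset \cdots$, so that each application of $Z$ moves up one level of the partition and each $\delta_2$-gap contributes a factor $\frac{M(\a^*)}{\delta_2} + N(\a^*)$. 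The value $\delta_2 \sim \frac{\a^* - \a_\tau}{qn}$ is chosen precisely so that, after bounding $\lVert w \rVert$ on the compact time interval by a constant $K$ (using continuity into $B_{\a^*}$ — but one needs the bound in an intermediate space, so one should localize to $[\tau, \tau+\Tau]$ for $\Tau < T(\a_\tau,\a^*)$ and iterate from there), the remainder term is bounded by
\[
\lVert w(t) \rVert_{\a^*} \leq C\, K\, \frac{1}{n!}\Bigl( \frac{qn}{eT'} + \nu N(\a^*)\Bigr)^n (t-\tau)^n,
\]
which tends to $0$ as $n\to\infty$ exactly when $t - \tau < T' \approx T(\a_\tau,\a^*)$, by the same asymptotics as in \eqref{sim}. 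Hence $w(t) = 0$ in $B_{\a^*}$ for $t$ in a subinterval $[\tau, \tau+\Tau]$ with $\Tau$ arbitrarily close to $T(\a_\tau,\a^*)$, and since $\tau + T(\a_\tau,\a^*) \geq s+T$ this covers all of $(\tau, s+T)$.

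The main obstacle I anticipate is the bootstrapping of regularity: to run the Duhamel iteration one needs $w(r)$ to lie in the intermediate spaces $B_{\a^{(2j,n)}}$, not merely in $B_{\a^*}$, and to have a uniform norm bound there. The clean way around this is first to observe (via a short argument using that $w$ solves \eqref{IVP} and the construction in Theorem~\ref{exist}, or equivalently by applying the existence/estimate of Corollary~\ref{cor:est} backwards in $\a$) that a continuous $B_{\a^*}$-valued solution with initial datum in $B_{\a_\tau}$ automatically agrees on a small time interval with one that is $B_\a$-valued for $\a < \a^*$; alternatively, one restarts the iterated Duhamel identity from a level just above $\a_\tau$ and absorbs the loss into the constant $q$. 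Once the iterated representation is justified with the partition chosen as above, the remaining estimates are the routine factorial/Stirling bounds already carried out in the proof of Corollary~\ref{cor:est}. Finally, uniqueness of the $u$ from Theorem~\ref{exist} follows by taking $\tau = s$, $\a_\tau = \a_s$, noting $T = T(\a_s,\a^*)$ so the hypothesis $\tau + T(\a_\tau,\a^*) \geq s+T$ holds with equality, and applying the above to any two solutions with the same initial value.
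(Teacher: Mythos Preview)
There is a real gap in your iteration. You propose to run the Duhamel identity through a partition of $[\a_\tau,\a^*]$, but this requires $w(r)\in B_{\a^{(2j,n)}}$ for levels $\a^{(2j,n)}<\a^*$, and nothing in the hypotheses gives that: $u_1,u_2$ are only assumed continuous into $B_{\a^*}$, so $w(r)\in B_{\a^*}$ and no better (except at the single instant $r=\tau$, where $w(\tau)=0$). Your suggested fixes are circular: showing that an arbitrary $B_{\a^*}$-valued solution coincides on a short interval with the $B_\a$-valued solution produced by Theorem~\ref{exist} \emph{is} the uniqueness statement you are trying to prove, so you cannot invoke it; and ``restarting from a level just above $\a_\tau$'' does not help, since there is no regularity information about $w(r)$ in $B_{\a_\tau+\delta}$ for $r>\tau$.

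The paper's remedy is to go \emph{above} $\a^*$ rather than below. Using that $I$ is open and $M$ is continuous, choose $\a^\circ\in I$ with $\a^\circ>\a^*$, write the Duhamel identity in $B_{\a^\circ}$, and iterate through a partition of $[\a^*,\a^\circ]$: since $w(r)\in B_{\a^*}$ one has $Zw(r)\in B_{\a''}\subset D_{\a'}$ for any $\a^*<\a''<\a'<\a^\circ$, and the estimate \eqref{estforUn} yields
\[
\lVert w(t)\rVert_{\a^\circ}\leq e^{\omega t}\Bigl(\frac{q'n}{e\,T(\a^*,\a^\circ)}+\nu N(\a^\circ)\Bigr)^n\frac{(t-\tau)^n}{n!}\sup_{r\in[\tau,t]}\lVert w(r)\rVert_{\a^*}.
\]
This forces $w=0$ only on a \emph{short} subinterval $[\tau,\tau+\sigma]$ with $\sigma<T(\a^*,\a^\circ)$, because the partition now lives in the narrow strip $[\a^*,\a^\circ]$; one then restarts the argument at time $\tau+\sigma$ (where again $w(\tau+\sigma)=0\in B_{\a^*}$) and repeats finitely many times to cover all of $[\tau,s+T)$. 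The time-stepping replaces the single long step you aimed for.
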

\begin{proof}
Take an arbitrary $\Tau\in(\tau-s,T)$ thus $\tau< s+\Tau<s+T<\tau+T(\a_\tau,\a^*)$. Since $\a^*\in I$, $I$ is an open interval and $M$ is continuous on $I$, there exists $\a^\circ\in I$ such that $\a^\circ>\a^*$ and $s+\Tau< \tau +T(\a_\tau,\a^\circ)$. Let $u(t):=u_1(t)-u_2(t)$, $t\in[\tau,s+T)$. Then $u$ solves \eqref{IVP} on $(\tau,s+T)$ with $u(\tau)=0\in B_{\a_\tau}$. It is enough to prove that $u(t)=0\in B_{\a^\circ}$ (and thus $u(t)=0\in B_{\a^*}$). Since the norm in $B_{\a^*}$ is stronger than the norm in $B_{\a^\circ}$, $u(t)$ solves \eqref{IVP} in $B_{\a^\circ}$ as well. Then, one has the following equality in $B_{\a^\circ}$
\begin{equation}\label{das}
  u(t)=\int_\tau^t S_{\a^\circ}(t-t') Z u(t') dt', \qquad t\in[\tau,s+\Tau].
\end{equation}
However, $u(t)\in B_{\a^*}$ hence, for any $\a'\in(\a^*,\a^\circ)$, one can take any $\a''\in(\a^*,\a')$ and consider the right hand side of \eqref{das} as follows: $u(t')\in B_{\a^*}$, $Zu(t')\in B_{\a''}\subset D_{\a'}$, $S_{\a^\circ}(t-t') Z u(\tau) =
S_{\a'}(t-t') Z u(\tau) \in B_{\a'}\subset B_{\a^\circ}$, and all the mappings are continuous.
Therefore, one can iterate \eqref{das} $n$ times and consider partition \eqref{partition}--\eqref{deltas} with $\a^\circ,\a^*$ in place of $\a^*,\a_s$ respectively. As a result, one gets, cf.~\eqref{estforUn},
\begin{equation}\label{estuniq}
  \lVert u(t)\rVert_{\a^\circ}\leq e^{\omega t}\Bigl( \frac{q'n}{e T(\a^*,\a^\circ)}+\nu N(\a^\circ)\Bigr)^n \frac{(t-\tau)^n}{n!}\lVert u(t)\rVert_{\a^*}, \quad t\in[\tau,s+\Tau],
\end{equation}
with a properly chosen $q'>1$.  Let now $N\in\N$ be big enough to guarantee that \eqref{estuniq} implies $\lVert u(t)\rVert_{\a^\circ}=0$, for $t\in[\tau,\tau+\sigma]$, $\sigma:=\frac{s+\Tau-\tau}{N}$, i.e. $N>\frac{s+\Tau-\tau}{T(\a^*,\a^\circ)}>0$. Thus $u(t)=0$ in $B_{\a^\circ}$ and hence in $B_{\a^*}$, for $t\in[\tau,\tau+\sigma]$. Repeat now the same arguments with initial zero value at $t=\tau+\sigma$, it will lead to the zero solution in $B_{\a^*}$ on $[\tau+\sigma,\tau+2\sigma]$ and so on. As a result, we will get that $u(t)=0$ in $B_{\a^*}$ on the whole $[\tau,s+\Tau]$, and since $\Tau$ was arbitrary we will have the uniqueness on the $[\tau,s+T)$.
\end{proof}

\begin{remark}\label{optimaltime}
  In applications, we often have an estimate like \eqref{singnorm} with $\tilde{M}(\a'',\a')$ in place of $\M(\a^*)$, with a function $\tilde{M}$ which is increasing in the first variable and decreasing in the second one thus $\tilde{M}(\a'',\a')\leq\tilde{M}(\a^*,\al)=:M(\a^*)$, and $M$ is an increasing function. cf. \cite{FKKoz2011,BKKK2011,FKKO2014,FKKZ2014}. Note also, that the function $T(\a,\beta)$ is not typically increasing in $\beta$, see \eqref{timeint} and the references above; therefore, the bigger terminal space $B_{\a^*}$ does not necessarily lead to a wider time interval. Note that, in the cited references, the function $T(\a,\cdot)$ had a unique maximum point.
\end{remark}

For any $\a\in I$, consider the set
\begin{equation}\label{proj}
  B_{\a+}:=\bigcap_{\beta>\a} B_\beta,
\end{equation}
which may be endowed by the sequential topology of a projective space, see e.g.~\cite{Ber1986}, i.e. $u_n\to u$ in $B_{\a+}$ if and only if $u_n\to u$ in all $B_\beta$, $\beta>\a$ (of course, by \eqref{incrfamily}, it is sufficient to take $\beta\in (\a,\a+\delta)$ only, for some $\delta>0$). Stress that, under Assumption~\ref{as:Ovsgen}, $Z$ is continuous on $B_{\a+}$.

\begin{proposition}\label{localisation}
In conditions and notations of Theorem~\ref{exist},
\begin{enumerate}
\item the mapping $B_{\a_s}\ni u_s\mapsto u(t)\in B_{\a^*}$ is continuous, uniformly in $t\in[s,s+\Tau]\subset[s,s+T)$;
\item for any $t\in(s,s+T)$, there exist
\begin{equation}\label{localis}
\a(t,s,\a_s):=\inf\bigl\{\a\in[\a_s,\a^*)\bigm| u(t)\in B_{\a}\bigr\}<\a^*.
\end{equation}
 such that $u(t)\in B_{\a(t,s,\a_s)+}$ and the mapping $B_{\a_s}\ni u_s\mapsto u(t)\in B_{\a(t,s,\a_s)+}$ is also continuous, uniformly in $t\in[s,s+\Tau]\subset[s,s+T)$;
\item one can take $u_s\in B_{\a_s+}$; then all previous statements remain true with $B_{\a_s+}$ in place of $B_{\a_s}$ only.
\end{enumerate}
\end{proposition}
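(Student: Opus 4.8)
The plan is to read all three claims off three facts already at hand: the solution $u$ built in Theorem~\ref{exist} depends linearly on $u_s$; by Corollary~\ref{cor:est} it obeys, for every $\a\in(\a_s,\a^*)$ with $\Tau<T(\a_s,\a)$, a bound $\lVert u(t)\rVert_\a\le C'e^{\omega t}\lVert u_s\rVert_{\a_s}$ on $[s,s+\Tau]$; and by \eqref{incrfamily} the norms decrease along the scale. Part (i) follows immediately: linearity reduces continuity to an operator-norm bound, so one picks $\a\in(\a_s,\a^*)$ with $\Tau<T(\a_s,\a)$, invokes Corollary~\ref{cor:est}, and passes from $\lVert\cdot\rVert_\a$ to $\lVert\cdot\rVert_{\a^*}$ by \eqref{incrfamily}.

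For part (ii), fix $t\in(s,s+T)$. Since $M$ is continuous and $T(\a_s,\a^*)=T>t-s$, the condition $t-s<T(\a_s,\cdot)$ holds on a left neighbourhood of $\a^*$, and whenever $\a\in(\a_s,\a^*)$ satisfies $t-s<T(\a_s,\a)$ the construction of Theorem~\ref{exist} (run on an interval $[s,s+\Tau']$ with $\Tau'\in(t-s,\min\{T,T(\a_s,\a)\})$) gives $u(t)\in B_\a$ for \emph{every} $u_s\in B_{\a_s}$; hence the set in \eqref{localis} is nonempty, the infimum there is to be read uniformly in $u_s$, and $\a(t,s,\a_s)<\a^*$. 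The inclusion $u(t)\in B_{\a(t,s,\a_s)+}$ is then immediate from \eqref{incrfamily}: for any $\beta>\a(t,s,\a_s)$ there is $\a<\beta$ with $u(t)\in B_\a\subset B_\beta$. Finally, a linear map into the projective limit $B_{\a(t,s,\a_s)+}$ is continuous precisely when it is continuous into each $B_\beta$, $\beta>\a(t,s,\a_s)$; for such $\beta$ one picks $\a\in(\a_s,\beta)$ with $t-s<T(\a_s,\a)$, takes $\Tau'\in(t-s,\min\{T,T(\a_s,\a)\})$, and reruns the estimate of part~(i) with $\a$ as the terminal space, obtaining continuity into $B_\a\subset B_\beta$; the ``uniformly in $t\in[s,s+\Tau]$'' clause then comes from the single-space bound of part~(i). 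This is the delicate step, and the reason it cannot be settled with one fixed intermediate space is that $\beta\mapsto T(\a_s,\beta)$ need not be monotone (Remark~\ref{optimaltime}), so Corollary~\ref{cor:est} has to be localized near each $\beta>\a(t,s,\a_s)$.

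For part (iii), let $u_s\in B_{\a_s+}$, so $u_s\in B_{\beta_0}$ for every $\beta_0\in(\a_s,\a^*)$. Applying Theorem~\ref{exist} with $B_{\beta_0}$ as the initial space gives a solution on $[s,s+T(\beta_0,\a^*))$; since $T(\beta_0,\a^*)\uparrow T(\a_s,\a^*)=T$ as $\beta_0\downarrow\a_s$ and, by the uniqueness part of Theorem~\ref{unique}, two such solutions coincide on their common interval, these functions glue to a solution $u$ on the whole $[s,s+T)$ satisfying \eqref{IVP} with $u(s)=u_s$. Given $\Tau<T$, choose $\beta_0$ close enough to $\a_s$ that $\Tau<T(\beta_0,\a)$ for some $\a<\a^*$; then Corollary~\ref{cor:est} (with initial space $B_{\beta_0}$) gives $\lVert u(t)\rVert_{\a^*}\le\lVert u(t)\rVert_\a\le C'e^{\omega t}\lVert u_s\rVert_{\beta_0}$ on $[s,s+\Tau]$, and, exactly as in part~(ii), also bounds the $B_{\a(t,s,\a_s)+}$-seminorms of $u(t)$ by a multiple of $\lVert u_s\rVert_{\beta_0}$. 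Since $\lVert\cdot\rVert_{\beta_0}$ is one of the seminorms generating the topology of $B_{\a_s+}$, this is precisely the asserted continuity of $B_{\a_s+}\ni u_s\mapsto u(t)$ into $B_{\a^*}$ and into $B_{\a(t,s,\a_s)+}$, uniformly on $[s,s+\Tau]$.
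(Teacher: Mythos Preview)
Your proposal is correct and follows essentially the same route as the paper: for (1) both arguments exploit the linearity of $u_s\mapsto u(t)$ together with the majorizing-series bound (packaged for you as Corollary~\ref{cor:est}, rederived in the paper as the estimate~\eqref{sadf}); for (2) both observe that the construction in Theorem~\ref{exist} actually lands $u(t)$ in $B_\a$ whenever $t-s<T(\a_s,\a)$, so the set in \eqref{localis} is nonempty and the projective-limit continuity follows from continuity into each $B_\beta$; for (3) both shift the initial space from $B_{\a_s}$ to $B_{\a_s'}$ with $\a_s'\in(\a_s,\a^*)$ close enough to $\a_s$ that $\Tau<T(\a_s',\a^*)$, and then rerun the earlier estimates. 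Your write-up is in fact a bit more explicit than the paper's in two places: you invoke linearity directly rather than writing out $u(t)-v(t)$, and in (3) you explicitly glue the solutions obtained for different $\beta_0$ via the uniqueness Theorem~\ref{unique}, whereas the paper leaves this implicit by noting that the solutions are given by ``the same series in the same space''.
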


\begin{proof} We will use details of the proof of Theorem~\ref{exist}.

 (1) Let $v(t)$ solve \eqref{IVP} on $[s,s+T)$ with $v(0)=v_s\in B_{\a_s}$. Then we will have that, for any $t\in[s,s+\Tau]\subset[s,s+T)$, and for the same $\a\in(\a_s,a^*)$, $q>1$,
 \begin{multline}\label{sadf}
  \lVert u(t)-v(t)\rVert_{\a^*}\leq \lVert u(t)-v(t)\rVert_{\a} \\\leq \max\bigl\{e^{\omega(s+ \Tau)},1\bigr\}\lVert u_s-v_s\rVert_{\a_s} \sum_{n=0}^\infty \frac{1}{n!}\Bigl( \frac{qn}{e T'}+\nu N(\a)\Bigr)^n \Tau^n,
 \end{multline}
  that implies the needed continuity, as, recall, $\a$ depends on $\Tau$, $q=q(T,T')=q(\a,\a_s)$ and thus the estimate is uniform in $t\in[s,s+\Tau]$.

   (2) Recall that the solution $u(t)$ in $B_{\a^*}$ to \eqref{IVP} is given on $[s,s+T)$ by \eqref{series} and, for a chosen $t\in[s,s+T)$, the value $u(t)$, as a matter of fact, belongs to $B_\a$, for any $\a\in[\a_s,\a^*)$ such that $s<t<s+T(\a_s,\a)$. Since $T(\a_s,\a_s)=0$ we have by the continuity arguments that there exists $\a^\circ=\a(t,s,\a_s)$ such that $T(\a_s,\a^\circ)=t-s$ and there exists an open subinterval of $(\a_s,\a^*)$ where $T(\a_s,\a)>t-s$ (or a union of such subintervals). Thus, the set in \eqref{localis} is non-empty, the infimum does exist, that yields the first statement. Next, by \eqref{sadf}, the mapping $B_{\a_s}\ni u_s\mapsto u(t)\in B_\a$ will be continuous, for any $\a>\a(t,s,\a_s)$, uniformly in $t\in[s,s+\Tau]$. This fulfilled the statement.

   (3) Let $u_s\in B_{\a_s+}$. For any $\Tau\in(0,T)$, one can choose $\a_s'\in(\a_s,\a^*)$ with $\a_s'-\a_s$ small enough to guarantee that $\Tau<T(\a_s',\a^*)<T$. Then one can repeat all arguments above and get that there exists solution to \eqref{IVP} on $[s,s+\Tau]$ in $B_{\a^*}$ with $u(0)=u_s\in B_{\a_s'}$ such that $B_{\a_s'}\ni u_s\mapsto u(t)\in B_{\a^*}$ (and $B_{\a^*}$ may be replaced on $B_{\a(t,s,\a_s')+})$. If we take now $\a_s''\in(\a_s,\a^*)$, $\a''\neq\a'$, then the solution will be given by {\it the same} series \eqref{series} which converges {\it in the same} space $B_{\a^*}$ (or even in a smaller space). The difference will be in the denominator of \eqref{est} only: namely, $\|u(t)\|_{\beta}\leq \bar C(\beta) \|u_s\|_\gamma$, $\gamma>\a_s$, $\beta\in (\a(t,s,\a_s'),\a^*)$, $t\in[s,s+\Tau]$. This, naturally, implies the continuity of the mapping $B_{\a_s+}\ni u_s\mapsto u(t)\in B_{\a(t,s,\a_s)+}$.
\end{proof}

\begin{remark}
  Note that, by the comparison series criterium, we have from \eqref{sim} that the majorized series for \eqref{series} diverges for $\a=\a(t,s,\a_s)$. However, one can not state that $u(t)\notin B_{\a(t,s,\a_s)}$.
\end{remark}

According to Theorem~\ref{localisation}, for any $s\geq0$, $\a^*\in I$, $\a_s\in(\al,\a^*)$, $t\in[s,s+T(\a_s,\a^*))$ one can define the mapping $U(s,t): B_{\a_s+}\to B_{\a(t,s,\a_s)+}$ given by
\begin{equation}\label{propag}
U(s,t)u_s=u(t).
\end{equation}
\begin{proposition}\label{sgrproperty}
Let $s\geq0$, $\a^*\in I$, $\a_s\in B_{\a_s}$ be arbitrary. Let $t>\tau>s$ be such that $\tau<s+T(\a_s,\a^*)$, $t<\min\big\{\tau+T(\a(\tau,s,\a_s),\a^*),s+T(\a_s,\a^*)\bigr\}$. Then
\begin{equation}\label{evol}
U(s,t)u_s=U(\tau,t)U(s,\tau)u_s.
\end{equation}
\end{proposition}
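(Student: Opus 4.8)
The plan is to exploit uniqueness (Theorem~\ref{unique}) applied on a suitable finite time interval. First I would set $v_s := U(s,\tau)u_s = u(\tau)$, where $u$ is the solution furnished by Theorem~\ref{exist} on $[s, s+T(\a_s,\a^*))$ with initial data $u_s$ at time $s$. By Proposition~\ref{localisation}(2), $v_s = u(\tau) \in B_{\a(\tau,s,\a_s)+}$; denote $\a_\tau := \a(\tau,s,\a_s) < \a^*$. The hypothesis $t < \tau + T(\a_\tau,\a^*)$ guarantees that Theorem~\ref{exist} applies with starting time $\tau$, initial space $B_{\a_\tau}$ (or, using Proposition~\ref{localisation}(3), $B_{\a_\tau+}$), initial datum $v_s$, and terminal space $B_{\a^*}$, producing a solution $w$ on $[\tau, \tau + T(\a_\tau,\a^*))$ with $w(\tau) = v_s$; by definition $w(t) = U(\tau,t)U(s,\tau)u_s$, which is the right-hand side of \eqref{evol}.

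The second step is to compare $w$ with the restriction of the original solution $u$ to the interval $[\tau, s+T(\a_s,\a^*))$. Both $u$ and $w$ are continuous $B_{\a^*}$-valued functions solving \eqref{IVP} on an interval containing $(\tau, t]$, and they agree at the point $\tau$: $u(\tau) = v_s = w(\tau) \in B_{\a_\tau}$. To invoke Theorem~\ref{unique} I need a common terminal-space/time-interval bookkeeping: choose $T_0 > 0$ small enough that $t < \tau + T_0$, that $\tau + T_0 \le s + T(\a_s,\a^*)$, and that $\tau + T_0 \le \tau + T(\a_\tau,\a^*)$, so that both $u$ and $w$ are defined and solve \eqref{IVP} on $(\tau, \tau + T_0)$ in $B_{\a^*}$. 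Since $u(\tau) = w(\tau) = v_s \in B_{\a_\tau}$ and $\tau + T(\a_\tau,\a^*) \ge \tau + T_0$, the hypotheses of Theorem~\ref{unique} are met with the roles $s \rightsquigarrow \tau$, $T \rightsquigarrow T_0$, $\tau \rightsquigarrow \tau$, $\a_\tau \rightsquigarrow \a_\tau$. Hence $u = w$ on $(\tau, \tau + T_0)$, in particular at the point $t$, giving $U(s,t)u_s = u(t) = w(t) = U(\tau,t)U(s,\tau)u_s$.

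The main obstacle, and the point requiring care, is the matching of the abstract hypotheses of Theorem~\ref{exist} and Theorem~\ref{unique}: one must verify that the value $u(\tau)$ produced at the intermediate time genuinely lands in a space $B_{\a_\tau}$ (not merely in the projective limit $B_{\a_\tau+}$) from which the second initial-value problem can be launched with a time-horizon long enough to reach $t$. This is exactly where the inequality $t < \tau + T(\a(\tau,s,\a_s),\a^*)$ in the hypothesis is used, and where Remark~\ref{optimaltime}'s warning about non-monotonicity of $T(\a,\cdot)$ matters: one cannot simply widen the terminal space. The clean way around the $B_{\a_\tau}$ versus $B_{\a_\tau+}$ issue is to pick any $\a_\tau' \in (\a_\tau, \a^*)$ with $t < \tau + T(\a_\tau', \a^*)$ — possible by continuity of $M$ since the inequality is strict — so that $u(\tau) \in B_{\a_\tau'}$ honestly, run Theorem~\ref{exist} from $\a_\tau'$, and then apply Theorem~\ref{unique} with $\a_\tau'$ in place of $\a_\tau$. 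Everything else is a routine restriction-of-interval argument. Finally, since the identity \eqref{evol} is an equality in $B_{\a^*}$ and, by Proposition~\ref{localisation}, both sides actually lie in $B_{\a(t,s,\a_s)+}$, the equality holds in that smaller space as well by Assumption~\ref{as:scale}.
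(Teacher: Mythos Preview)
Your proposal is correct and follows essentially the same approach as the paper's proof: both arguments set $\sigma=\min\bigl\{\tau+T(\a(\tau,s,\a_s),\a^*),\,s+T(\a_s,\a^*)\bigr\}$ (your $\tau+T_0$), observe that the two candidate solutions $t\mapsto U(s,t)u_s$ and $t\mapsto U(\tau,t)U(s,\tau)u_s$ both solve \eqref{IVP} on $(\tau,\sigma)$ in $B_{\a^*}$ with the common value $U(s,\tau)u_s$ at $t=\tau$, and then invoke Theorem~\ref{unique}. Your version is in fact more careful than the paper's on one point: you explicitly handle the distinction between $u(\tau)\in B_{\a_\tau+}$ and the hypothesis $u_\tau\in B_{\a_\tau}$ required by Theorem~\ref{unique}, by passing to a slightly larger $\a_\tau'\in(\a_\tau,\a^*)$ with $t<\tau+T(\a_\tau',\a^*)$ via continuity of $M$; the paper's proof leaves this implicit in the definition of $U$ on projective limits (Proposition~\ref{localisation}(3)).
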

\begin{proof}
  The statements follows from the uniqueness Theorem~\ref{unique}. Indeed, $\sigma:=\min\big\{\tau+T(\a(\tau,s,\a_s),\a^*),s+T(\a_s,\a^*)\bigr\}> s$, therefore, $T:=\sigma-s>0$. Then, by the construction of the mapping \eqref{propag}, both functions $U(s,t)u_s$  and $U(\tau,t)U(s,\tau)u_s$ solves \eqref{IVP} on $(\tau,s+T)$ and they are both equal to $U(s,\tau)u_s$ at $t=\tau$. Hence, by Theorem~\ref{unique} they coincide on $(\tau,\tau+T)$ as well.
\end{proof}

\begin{remark}
In the same manner as before, one prove the following statement. In conditions and notations of Theorem~\ref{exist}, suppose, additionally that there exists $\a^{**}\in I$ such that $\a^*<\a^{**}$ and Assumption~\ref{as:Ovsgen} holds for $ \a^{**}$ in place of $\a^*$. Set $\tilde{T}:=T(\a_s,\a^{**})$ and $T_0:=\min\{T,\tilde{T}\}$. Let $\tilde{u}:[s,s+\tilde{T})\to B_{\a^{**}}$ be the solution to \eqref{IVP} according to Theorem~\ref{exist}. Then, for any $t\in[s,s+T_0)$, $\tilde{u}(t)=u(t)\in B_{\a^*}$.
\end{remark}

\begin{assume}\label{as:semgen-eps}
Let $\{ D_\a, C_\a\}_{\a\in I}$ be such as in Assumption~\ref{as:semgen}. Let $A_\eps:B_I\to B_I$, $\eps\geq0$ be linear operators, such that, for any $\eps\geq0$ and for any $\a\in I$, the operator $(A_\eps,D_\a)$ is a generator of a $C_0$-semigroup $S_{\a,\eps}(t)$ on the Banach space $(C_\a,\lVert\cdot\rVert_\a)$. Assume that, for any $\a'\in I$ with $\a'<\a$, $B_{\a'}\subset D_\a$; $B_{\a'}$ is $S_{\a,\eps}(t)$-invariant, and $S_{\a,\eps}(t)\upharpoonright_{B_{\a'}}=S_{\a',\eps}(t)$. Suppose also that the constants $\nu\geq1$, $\omega\in\R$ are such that
\begin{equation}\label{uniqbdd-eps}
  \lVert S_{\a,\eps}(t)\rVert\leq \nu e^{\omega t}, \qquad t\geq0, \a\in I,\eps\geq0.
\end{equation}
\end{assume}

\begin{assume}\label{as:Ovsgen-eps}
Let $M,N: I\to (0,\infty)$ be increasing continuous functions. Let, for any $\a^*\in I$ and for any $\a',\a''\in (\al,\a^*]$ with $\a'<\a''$, $Z_\eps$, $\eps\geq0$, be bounded linear operators from $B_{\a'}$ to $B_{\a''}$, such that the following estimate holds:
\begin{equation}\label{singnorm-eps}
\lVert Z_\eps u \rVert_{\a''} \leq \Bigl( \frac{M(\a^*)}{\a''-\a'}+N(\a^*)\Bigr) \lVert u\rVert_{\a'}, \quad u\in\ B_{\a'}.
\end{equation}
\end{assume}

\begin{theorem}\label{thm:conv}
  Let Assumption~\ref{as:scale},~\ref{as:semgen-eps},~\ref{as:Ovsgen-eps} hold.
  Let $P_\eps, p_\eps:I\to (0,\infty)$, $\eps>0$ be increasing continuous functions, such that
  \begin{equation}\label{seq0}
   \lim_{\eps\to0}p_\eps(\a)=\lim_{\eps\to0}P_\eps(\a)=0, \quad \a\in I,
   \end{equation}
  and let $r\in\N$.
  Let $\a^*\in I$ and $\a',\a''\in (\al,\a^*)$, $\a'<\a''$ be arbitrary, and suppose that
  \begin{equation}\label{convsg}
  \lVert S_{\a'',\eps}(t)u -S_{\a'',0}(t) u\rVert_{\a''} \leq p_\eps(\a^*)
    e^{\omega t} \lVert u\rVert_{\a'}, \qquad t\in(s,s+T), \eps>0, u\in C_{\a'},
  \end{equation}
  and
  \begin{equation}\label{convZ}
  \lVert Z_\eps u -Z_0 u\rVert_{\a''} \leq
   \sum_{j=1}^r \frac{P_\eps(\a^*)}{(\a''-\a')^j}\lVert u\rVert_{\a'}, \qquad \eps>0, u\in B_{\a'}.
  \end{equation}
  Let $s\geq0$, $\a_s\in(\al,\a^*)$, $u_{s,\eps}, u_{s,0}\in B_{\a_s}$ be arbitrary, and suppose that
  \begin{equation}\label{initconv}
    \lim_{\eps\to0}\lVert u_{s,\eps} - u_{s,0}\rVert_{\a_s}=0.
  \end{equation}
  Then, for any $\eps\geq0$, there exist a unique solution to the differential equation
  \begin{equation}\label{IVPeps}
    \begin{cases}
      \dfrac{d}{dt} u_\eps(t) = (A_\eps+Z_\eps) u_\eps(t),\qquad t\in(s,s+T)\\[2mm]
      u_\eps(s)= u_{s,\eps},
    \end{cases}
  \end{equation}
  in $B_{\a^*}$, where $T=T(\a_s,\a^*)$; and, moreover, for any $\Tau\in(s,s+T)$,
  \begin{equation}\label{conv}
     \lim_{\eps\to0}\sup_{t\in[s,s+\Tau]}\lVert u_{\eps} (t)- u_{0}(t)\rVert_{\a^*}=0.
  \end{equation}
\end{theorem}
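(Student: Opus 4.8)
\emph{Strategy and setup.} The plan is to carry out, for each fixed $\eps\ge0$, the construction from the proof of Theorem~\ref{exist}, and then to compare the two resulting series for $u_\eps$ and $u_0$ term by term via a telescoping argument. For a fixed $\eps\ge0$, Assumptions~\ref{as:semgen-eps} and~\ref{as:Ovsgen-eps} are precisely Assumptions~\ref{as:semgen} and~\ref{as:Ovsgen} for the operators $A:=A_\eps$, $Z:=Z_\eps$ and the semigroups $S_\a:=S_{\a,\eps}$, with $\nu,\omega$ and the functions $M,N$ independent of $\eps$; hence Theorems~\ref{exist} and~\ref{unique} yield a unique solution $u_\eps\colon[s,s+T)\to B_{\a^*}$ of \eqref{IVPeps}, where $T=T(\a_s,\a^*)$. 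Fix $\Tau\in(0,T)$ and choose, as in the proof of Theorem~\ref{exist}, $\a\in(\a_s,\a^*)$ with $\Tau<T(\a_s,\a)=:T'$, a number $q>1$ with $q\Tau<\min\{T,T'\}$, and the partition \eqref{partition}--\eqref{deltas}. Then, on $[s,s+\Tau]$ and for every $\eps\ge0$, one has in $B_\a$ the representation
\[
u_\eps(t)=S_{\a,\eps}(t-s)u_{s,\eps}+\sum_{n=1}^\infty V^{(n)}_{\a,\eps}(s,t)u_{s,\eps},
\]
where $V^{(n)}_{\a,\eps}$ is assembled from the kernel obtained from \eqref{kernelU} by replacing $S_\a,Z$ with $S_{\a,\eps},Z_\eps$. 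Since $\a$, $q$ and the partition depend only on $\Tau$, the same choice serves both $u_\eps$ and $u_0$.

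\emph{Telescoping.} The expression $U^{(n)}_{\a}(t,t_1,\dots,t_n)u_s$ is a product of the $2n+1$ operators $S_\a(t-t_1),Z,S_\a(t_1-t_2),Z,\dots,S_\a(t_{n-1}-t_n),Z,S_\a(t_n)$ applied to the vector $u_s$. I would expand the difference $U^{(n)}_{\a,\eps}(t,\dots)u_{s,\eps}-U^{(n)}_{\a,0}(t,\dots)u_{s,0}$ into a telescoping sum of $2n+2$ terms, in each of which exactly one object is a difference --- $S_{\a,\eps}(\tau)-S_{\a,0}(\tau)$, or $Z_\eps-Z_0$, or $u_{s,\eps}-u_{s,0}$ --- with all objects to its right (including the vector) carrying the subscript $\eps$ and all objects to its left the subscript $0$. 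Each such term is estimated across the partition \eqref{partition}--\eqref{deltas}. A semigroup difference is controlled by \eqref{convsg}: the element fed into it lies in some $B_\beta$ with $\beta$ a partition point strictly below the next one, hence in the relevant $C_{\a'}$ (recall $B_\beta\subset D_{\a'}\subset C_{\a'}$ for $\beta<\a'$), and \eqref{convsg} yields the factor $p_\eps(\a^*)e^{\omega\tau}$ with \emph{no} singular factor --- the $\delta_1$-gap that serves as a free inclusion in the proof of Theorem~\ref{exist} now plays the role of the admissible loss in \eqref{convsg} --- while the $n$ copies of $Z$ keep their $\delta_2$-gaps and are estimated as in \eqref{estforUn}. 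A factor $Z_\eps-Z_0$ is controlled by \eqref{convZ} across its $\delta_2$-gap; since $\delta_2=(\a-\a_s)/(qn)\le1$ for all large $n$, one has $\sum_{j=1}^r P_\eps(\a^*)\delta_2^{-j}\le r\,P_\eps(\a^*)\,\delta_2^{-r}=r\,P_\eps(\a^*)\bigl(qn/(\a-\a_s)\bigr)^r$, and the remaining $n-1$ copies of $Z$ keep their $\delta_2$-gaps. The term carrying $u_{s,\eps}-u_{s,0}$ is estimated exactly as in \eqref{estforUn}--\eqref{majser}, with $\lVert u_{s,\eps}-u_{s,0}\rVert_{\a_s}$ in place of $\lVert u_s\rVert_{\a_s}$.

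\emph{Summation and conclusion.} Summing the semigroup-difference terms over the $n+1$ semigroup slots and the $Z$-difference terms over the $n$ slots produces extra factors polynomial in $n$. Using $\nu M(\a)/\delta_2=qn/(eT')$, the $n$-th term of the resulting majorant is, up to a polynomial factor in $n$, of the same shape $\frac1{n!}\bigl(\tfrac{qn}{eT'}+\nu N(\a)\bigr)^n(t-s)^n$ as in \eqref{majser}; by the computation \eqref{sim}, polynomial factors leave the $n$-th root unchanged, and it still tends to $q(t-s)/T'\le q\Tau/T'<1$, so all three majorant series converge uniformly on $[s,s+\Tau]$. Hence
\[
\sup_{t\in[s,s+\Tau]}\lVert u_\eps(t)-u_0(t)\rVert_{\a^*}\le\sup_{t\in[s,s+\Tau]}\lVert u_\eps(t)-u_0(t)\rVert_{\a}\le C\bigl(p_\eps(\a^*)+P_\eps(\a^*)+\lVert u_{s,\eps}-u_{s,0}\rVert_{\a_s}\bigr)
\]
for some $C>0$ independent of $\eps$, and \eqref{conv} follows from \eqref{seq0} and \eqref{initconv}.

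\emph{Main obstacle.} The delicate point is the $(\a''-\a')^{-r}$ singularity in \eqref{convZ}, which on its own is outside the range admissible for Ovsyannikov's method. The reason the argument still goes through is that in each telescoping term only \emph{one} factor is the difference $Z_\eps-Z_0$, so the strong singular quantity $\delta_2^{-r}\sim(qn)^r$ occurs at most once per term and only to the \emph{fixed} power $r$ (not to the power $n$); it therefore multiplies the $n$-th term of the majorant merely by a polynomial in $n$, which is irrelevant for the radius of convergence. The remaining, more routine, bookkeeping is to check that the scale indices occurring in the telescoped products match the partition \eqref{partition}--\eqref{deltas}, so that \eqref{convsg} and \eqref{convZ} can be applied across the $\delta_1$- and $\delta_2$-gaps respectively, and that $\a$, $q$ and the partition may be kept fixed while $\eps\to0$.
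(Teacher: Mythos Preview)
Your argument is correct and follows essentially the same route as the paper: series representation for each $\eps\ge0$, a telescoping of the kernel difference into terms carrying exactly one of $S_{\a,\eps}-S_{\a,0}$, $Z_\eps-Z_0$, or $u_{s,\eps}-u_{s,0}$, estimation across the partition \eqref{partition}--\eqref{deltas}, and summation via the majorant \eqref{majser}. The only cosmetic difference is that you fold the initial-data difference into the $(2n+2)$-term telescoping, whereas the paper first peels off $\sum_n V^{(n)}_{\a,\eps}(s,t)(u_{s,\eps}-u_{s,0})$ and bounds it via Corollary~\ref{cor:est}, then telescopes the operator difference acting on $u_{s,0}$; your explicit remark that the $\delta_1$-gap absorbs the loss in \eqref{convsg} is in fact a slight clarification over the paper's presentation.
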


\begin{proof}
  The existence and uniqueness of solutions to \eqref{IVPeps} follow directly from Theorems~\ref{exist} and~\ref{unique}. By the proof of Theorem~\ref{exist}, it is easy to see that there exists $\a=\a(\Tau)\in(\a_s,\a^*)$, which does not depend on $\eps$, such that
    \begin{equation}\label{series-eps}
      u_\eps(t)=\sum_{n=0}^\infty V_{\a,\eps}^{(n)}(s,t)u_{s,\eps}, \qquad\eps\geq0,
    \end{equation}
    where $V_{\a,\eps}^{(0)}(s,t):=S_{\a,\eps}(t-s)$ and
    \begin{align*}
    V_{\a,\eps}^{(n)}(s,t)&:=\int_s^t\int_s^{t_1}\ldots\int_s^{t_{n-1}} U_{\a,\eps}^{(n)}(t,t_1,\ldots,t_n)\,dt_{n}\ldots dt_1,\\
    U_{\a,\eps}^{(n)}(t,t_1,\ldots,t_n)&:=S_{\a,\eps}(t-t_1)Z_\eps S_{\a,\eps}(t_1-t_2)Z_\eps\ldots S_{\a,\eps}(t_{n-1}-t_n)Z_\eps S_{\a,\eps}(t_n),
    \end{align*}
    and the series \eqref{series-eps} converges in $B_\a$. Recall that $\Tau<T'=T(\a_s,\a)$ and let, as before, $q\in\bigl(1,\frac{T'}{\Tau}\bigr)$.

    Therefore, by the proof of Corollary \ref{cor:est},
    \begin{align*}
      &\quad \lVert u_\eps(t)-u_0(t)\rVert_\a  \\&\leq \sum_{n=0}^\infty \bigl\lVert \bigl(V_{\a,\eps}^{(n)}(s,t)-V_{\a,0}^{(n)}(s,t)\bigr)u_{s,0}\bigr\rVert_\a + \sum_{n=0}^\infty \bigl\lVert V_{\a,0}^{(n)}(s,t)(u_{s,\eps}-u_{s,0})\bigr\rVert_\a\notag\\
      &\leq \bigl\lVert \bigl(S_{\a,\eps}(t-s)-S_{\a,0}(t-s)\bigr)u_{s,0}\bigr\rVert_\a
      \notag\\&\quad +\sum_{n=1}^\infty \int_s^t\int_s^{t_1}\ldots\int_s^{t_{n-1}} \bigl\lVert \bigl(U_{\a,\eps}^{(n)}(t,t_1,\ldots,t_n)-U_{\a,0}^{(n)}(t,t_1,\ldots,t_n)\bigr)u_{s,0}\bigr\rVert_\a\,dt_{n}\ldots dt_1 \notag\\&\quad+\frac{Ce^{\omega (s+\Tau)}}{T'-q\Tau}\lVert u_{s,\eps}-u_{s,0}\rVert_{\a_s}.\notag
    \end{align*}
    Denote, for simplicity of notations,
    $Q_{\a,\eps}(t):=S_{\a,\eps}(t)-S_{\a,0}(t), \quad t\geq0, \qquad R_{\eps}:=Z_\eps-Z_0$.
    Then, for $n\geq1$,
    \begin{align*}
      \quad &U_{\a,\eps}^{(n)}(t,t_1,\ldots,t_n)-U_{\a,0}^{(n)}(t,t_1,\ldots,t_n) \\& =Q_{\a,\eps}(t-t_1) Z_\eps S_{\a,\eps}(t_1-t_2)Z_\eps\ldots S_{\a,\eps}(t_{n-1}-t_n)Z_\eps S_{\a,\eps}(t_n)\\
       &\quad+ S_{\a,0}(t-t_1)R_\eps S_{\a,\eps}(t_1-t_2)Z_\eps\ldots S_{\a,\eps}(t_{n-1}-t_n)Z_\eps S_{\a,\eps}(t_n)\\
       &\quad+ S_{\a,0}(t-t_1)Z_0 Q_{\a,\eps}(t_1-t_2)Z_\eps\ldots S_{\a,\eps}(t_{n-1}-t_n)Z_\eps S_{\a,\eps}(t_n)\\
       &\quad+ \ldots\\
       &\quad+ S_{\a,0}(t-t_1)Z_0 S_{\a,0}(t_1-t_2)Z_0\ldots Q_{\a,\eps}(t_{n-1}-t_n)Z_\eps S_{\a,\eps}(t_n)\\
       &\quad+ S_{\a,0}(t-t_1)Z_0 S_{\a,0}(t_1-t_2)Z_0\ldots S_{\a,0}(t_{n-1}-t_n)R_\eps S_{\a,\eps}(t_n)\\
       &\quad+ S_{\a,0}(t-t_1)Z_0 S_{\a,0}(t_1-t_2)Z_0\ldots S_{\a,0}(t_{n-1}-t_n)Z_0 Q_{\a,\eps}(t_n).
    \end{align*}
    By using the partition \eqref{partition}--\eqref{deltas}, one gets, cf.~\eqref{estforUn},
    \begin{align*}
      \quad &\lVert U_{\a,\eps}^{(n)}(t,t_1,\ldots,t_n)-U_{\a,0}^{(n)}(t,t_1,\ldots,t_n) \rVert_\a\\& \leq \nu e^{\omega t}\Bigl( \frac{qn}{e T'}+\nu N(\a)\Bigr)^{n-1}
       \biggl(  n p_\eps(\a) + n\nu\sum_{j=1}^r \frac{P_\eps(\a)}{\delta_2^j} \biggr)\lVert u\rVert_{\a_s}\\& \leq \nu e^{\omega t}\Bigl( \frac{qn}{e T'}+\nu N(\a)\Bigr)^{n-1}
       \biggl(  n \nu p_\eps(\a) + n\nu\sum_{j=1}^r \frac{(qn)^j}{(eT')^j}P_\eps(\a)\biggr)\lVert u\rVert_{\a_s}.
    \end{align*}
    As a result,
    \begin{align*}
      &\quad \lVert u_\eps(t)-u_0(t)\rVert_\a  \\
      &\leq p_\eps(\a^*)\nu e^{\omega \Tau}\lVert u_{s,0}\bigr\rVert_{\a,s}
      \\&\quad+P_\eps(\a)\sum_{n=1}^\infty \frac{\Tau^n}{n!}\nu e^{\omega (s+\Tau)}\Bigl( \frac{qn}{e T'}+\nu N(\a)\Bigr)^{n} \biggl(  n\nu\sum_{j=1}^r \frac{(qn)^{j-1}}{(eT')^{j-1}} \biggr)\lVert u_{s,0}\bigr\rVert_{\a,s}
      \notag\\&\quad+p_\eps(\a)\sum_{n=1}^\infty \frac{\Tau^n}{n!}\nu e^{\omega (s+\Tau)}\Bigl( \frac{qn}{e T'}+\nu N(\a)\Bigr)^{n-1}   n\nu\lVert u_{s,0}\bigr\rVert_{\a,s}
      \notag\\&\quad+\frac{Ce^{\omega (s+\Tau)}}{T'-q\Tau}\lVert u_{s,\eps}-u_{s,0}\rVert_{\a_s},\notag
    \end{align*}
    that fulfills the statement, by \eqref{initconv}, \eqref{seq0}; note that convergence of two latter series holds by \eqref{sim}. (As a matter of fact, we have proved the convergence \eqref{conv} with a stronger norm $\lVert \cdot\rVert_\a$.)
\end{proof}

\section{An application to birth-and-death dynamics}\label{sec:appl}

We will start with a brief introduction to the configuration space analysis. More detailed explanation may be found in, e.g., \cite{KK2002,FKK2011a,FKK2014}.

Let $\Bb$ be the set of all bounded Borel subsets of $\X$. The configuration space over space $\X$ consists of all locally
finite subsets (configurations) of $\X$, i.e.
\begin{equation} \label{confspace}
\Ga :=\bigl\{ \ga \subset \X \bigm| |\ga _\La
|<\infty, \ \mathrm{for \ all } \ \La \in {\B}_{\mathrm{b}}
(\X)\bigr\}.
\end{equation}
Here $|\cdot|$ means the cardinality of a~set, and
$\ga_\La:=\ga\cap\La$. The Borel $\sigma$-algebra $\B(\Ga)$ is generated by all mappings $\Ga\ni\ga\mapsto|\ga_\La|\in \N_0:=\N\cup\{0\}$. Let $\Mfm$ be the set of all probability measures $\mu$ on $\bigl(\Ga,\B(\Ga)\bigr)$ such that $\int_\Ga |\ga_\La|^n\,d\mu(\ga)<\infty$, for any $\La\in\Bb$ and $n\in\N$.

Let $\Ga_0$ be the space of all finite configurations from $\X$, i.e.
\begin{equation} \label{confspace0}
\Ga_0 :=\bigl\{ \eta \subset \X \bigm| |\eta |<\infty\bigr\}.
\end{equation}
Then $\Ga_0=\bigsqcup_{n\in\N_0}\Ga^{(n)}$, where $\Ga^{(n)} :=\bigl\{ \eta \subset \X \bigm| |\eta |=n\bigr\}$, $n\in\N_0$. Clearly, $\Ga^{(n)}\sim \widetilde{(\X)^n}/S_n$, where the tilde denotes the product set without diagonals and $S_n$ is the permutation group. This isomorphism provides the natural $\sigma$-algebra $\B(\Ga_0)$ on $\Ga_0$. The Lebesgue--Poisson measure on $\bigl(\Ga_0,\B(\Ga_0)\bigr)$ is defined via the following equality:
\begin{equation}\label{LebPois}
  \int_{\Ga_0} G(\eta) \,d\eta = G^{(0)}+ \sum_{n=1}^\infty \int_{(\X)^n} G^{(n)}(x_1,\ldots,x_n)\,dx_1\ldots dx_n,
\end{equation}
where $G$ is a measurable nonnegative function on $\Ga_0$, which may be identified with the sequence of symmetric functions $G^{(n)}$, namely, $G(\{x_1,\ldots,x_n\})=G^{(n)}(x_1,\ldots,x_n)$, $G(\emptyset)=G^{(0)}\in\R$.

Let $\Bbs$ be the set of all measurable bounded functions $G:\Ga_0\to\R$ such that there exist $N\in\N$ and $\La\in\Bb$ such that, for $n>N$,  $G^{(n)}\equiv0$, and, for $n\leq N$, $G^{(n)}(x_1,\ldots,x_n)=0$ if only $x_i\notin\La$, for some $1\leq i\leq n$. Then, for any $G\in\Bbs$, one can define the following function on $\Ga$:
\begin{equation}
(KG)(\ga ):=\sum_{\eta \Subset \ga }G(\eta ), \quad \ga \in \Ga,\label{K-transform}
\end{equation}
where the summation is
taken over all finite subconfigurations $\eta\in\Ga_0$ of the
(infinite) configuration $\ga\in\Ga$; we denote this by the symbol,
$\eta\Subset\ga $.
The mapping $K$ is linear, positivity preserving,
and invertible, with
\begin{equation}
(K^{-1}F)(\eta ):=\sum_{\xi \subset \eta }(-1)^{|\eta \setminus \xi
|}F(\xi ),\quad \eta \in \Ga_0. \label{k-1transform}
\end{equation}
It can be shown that, for $G\in\Bbs$ with $\La\in\Bb$, $N\in\N$ as above,
$KG(\ga)=KG(\ga_\La)$ and $|KG(\ga)|\leq C (1+|\ga_\La|)^N$, $\ga\in\Ga$. In particular, $KG\in L^1(\Ga,\mu)$, for any $\mu\in\Mfm$.
The correlation function of a measure $\mu\in\Mfm$ is the function $k_\mu:\Ga_0\to\R_+$ which satisfies the identity
\begin{equation}\label{cfdef}
  \int_\Ga (KG)(\ga)\,d\mu(\ga)=\int_{\Ga_0} G(\eta) k_\mu(\eta)\,d\eta,
\end{equation}
for any $0\leq G\in\Bbs$, provided such $k_\mu$ does exist.

We consider a model which is a combination of models discussed in \cite{FK2009} and \cite{FKKZ2014}.

Let $a,\phi:\X\to\R_+:=[0,\infty)$ be measurable nonnegative symmetric functions, i.e. $a(-x)=a(x)$, $\phi(-x)=\phi(x)$, $x\in\X$. Assume that $a,\phi\in L^1(\X,dx)\cap L^\infty(\X,dx)$. Set
\begin{equation}\label{integrof}
  \langle a\rangle:=\int_\X a(x)\,dx, \quad \bar a:=\esssup_{x\in\X} a(x), \quad
  \langle \phi\rangle:=\int_\X \phi(x)\,dx, \quad \bar\phi:=\esssup_{x\in\X} \phi(x).
\end{equation}
Let $m,\la>0$ be constants. For any $F\in K\bigl(\Bbs\bigr)$, we define the mapping
\begin{equation}\label{Mgen}
\begin{split}
  (LF)(\ga)&=\sum_{x\in\ga} \biggl(\sum_{y\in\ga\setminus x} a(x-y)+m\exp\Bigl(-\sum_{y\in\ga\setminus x} \phi(x-y) \Bigr) \biggr) \bigl( F(\ga\setminus x)-F(\ga)\bigr)\\&\quad +\la\int_\X \bigl(F(\ga\cup x)-F(\ga)\bigr)\,dx.
\end{split}
\end{equation}
Here and below we use the notations $\setminus x$ and $\cup x$ instead of more precise $\setminus \{x\}$ and $\cup \{x\}$, respectively. Heuristically, $L$ describes the followig evolution of configurations: during a (small) time $t$ in an arbitrary domain $\La\in\Bb$ a new elements may appear with the probability $\la\, \mathrm{vol}(\La) \, t+o(t)$, whereas the probability for the existing point $x\in\ga$ disappears is equal to
$
\sum_{y\in\ga\setminus x} a(x-y)t+m\exp\bigl(-\sum_{y\in\ga\setminus x} \phi(x-y) \bigr) t+o(t)$.
Thus this probability will be close to $1$ in very dense regions of the space as well as in the almost `uninhabited' places.

Since $F(\ga\cup x)=F((\ga\cup x)_\La)$, the integrant in \eqref{Mgen} equals to $0$ outside of $\La$, thus the integral is well-defined. By the same arguments, the first (outer) sum in \eqref{Mgen} is taken over $x\in\ga_\La$ only. The other sums (in $y$) are, however, infinite. In particular, \eqref{Mgen} is well-defined for all $\ga\in\Ga$, if, say, $a$ has a bounded support. It is worth noting, that, regardless of $a$, \eqref{Mgen} is defined pointwise, for $\ga\in\Ga_0$. This is sufficient to consider
\begin{equation}\label{Lhatdef}
  (\widehat{L}G)(\eta):=(K^{-1} LK G)(\eta), \quad \eta\in\Ga_0, G\in\Bbs.
\end{equation}
By results of \cite{FKO2009} and \cite[Proposition~3.1]{FKKZ2014}, one has that, for any $G\in\Bbs$, $\eta\in\Ga_0$,
\begin{equation}
\label{Lhat}
\begin{split}
  (\widehat{L}G)(\eta)=&-E^a(\eta)G(\eta)-\sum_{x\in\eta}\Bigl(\sum_{y\in\eta\setminus x} a(x-y)\Bigr)G(\eta\setminus x) \\
  &-m\sum_{\xi \subset \eta }G(\xi )\sum_{x\in
\xi }e^{-E^{\phi }\left( x,\xi \setminus x\right) }e_{\lambda }\bigl(
e^{-\phi \left( x-\cdot \right) }-1,\eta \setminus \xi \bigr)\\&+\la\int_\X G(\eta\cup x)\,dx,
\end{split}
\end{equation}
where
\begin{align}\label{Energy}
  E^a(\eta)&:=\sum_{x\in\eta}\sum_{y\in\ga\setminus x}a(x-y), \quad \eta\in\Ga_0,\\
  E^\phi(x,\eta\setminus x)&:=\sum_{y\in\ga\setminus x}\phi(x-y), \quad \eta\in\Ga_0, x\in\eta,
\end{align}
and, for any measurable $f:\X\to\R$,
\begin{equation}\label{LPexp}
e_\la (f,\eta ):=\prod_{x\in \eta }f(x) ,\ \eta \in \Ga
_0\!\setminus\!\{\emptyset\},\quad e_\la (f,\emptyset ):=1.
\end{equation}

The mapping $L$ defines an evolution of measures in $\Mfm$. Namely, for a given $\mu_0\in\Mfm$, consider the initial value problem
\begin{equation}\label{BKE}
  \begin{cases}
  \frac{d}{dt}\int_\Ga F(\ga)\,d\mu_t(\ga)=\int_\Ga (LF)(\ga)\,d\mu_t(\ga), \quad t>0,\\[2mm]
  \mu_t\bigr\rvert_{t=0}=\mu_0,
  \end{cases}
\end{equation}
which should hold for any $F\in K\bigl(\Bbs\bigr)$ such that the right hand side of \eqref{BKE} is well-defined. The equation \eqref{BKE} may be rewritten in terms of the correlation functions $k_t:=k_{\mu_t}$ of measures $\mu_t\in\Mfm$, provided that they all do exist. Namely, one has
\begin{equation}\label{BKE-desc}
  \begin{cases}
  \frac{d}{dt}\int_{\Ga_0} G(\eta)k_t(\eta)\,d\eta=\int_{\Ga_0} (\widehat{L}G)(\eta)k_t(\eta)\,d\eta, \quad t>0,\\[2mm]
  k_t\bigr\rvert_{t=0}=k_0=k_{\mu_0}.
  \end{cases}
\end{equation}
The latter equation will be the main object of our interest. For relations between solutions to \eqref{BKE-desc} and \eqref{BKE} see, e.g., \cite{FKK2011a}.
One can rewrite \eqref{BKE-desc} in the ``strong'' form:
\begin{equation}\label{QFP}
  \frac{\partial}{\partial t} k_t(\eta) = (L^\triangle k_t)(\eta), \quad \eta\in\Ga_0, t>0,
\end{equation}
where the linear mapping $L^\triangle$ is defined via the duality
\begin{equation}\label{duality}
  \int_{\Ga_0} (\widehat{L}G)(\eta)k(\eta)\,d\eta=
    \int_{\Ga_0} G(\eta)(L^\triangle k)(\eta)\,d\eta,
\end{equation}
for $G,k\in\Bbs$, and it is extended to the linear operator in (a scale of) Banach spaces by the constructions below. By, e.g., \cite{FKO2009} and \cite{FKKZ2014}, one has that
\begin{equation}\label{Ltr}
  \begin{split}
  (L^\triangle k)(\eta)=&-E^a(\eta)k(\eta)-\sum_{y\in\eta} \int_\X a(x-y)k(\eta\cup x)\,dx \\&-m\sum_{x\in \eta
}e^{-E^{\phi }\left( x,\eta \setminus x\right) }\int_{\Gamma _{0}}k\left(
\eta \cup \xi \right) e_{\lambda }\bigl( e^{-\phi \left( x-\cdot \right)
}-1,\xi \bigr) d \xi \\& +\la\sum_{x\in\eta}k(\eta\setminus x), \quad \eta\in\Ga_0.
\end{split}
\end{equation}

We consider the following scale of Banach spaces:
\begin{equation}\label{Kc}
  \K_\a:=\bigl\{ k:\Ga_0\to\R \bigm\vert k(\eta) \a ^{-|\eta|}\in L^\infty(\Ga_0,d\eta)\bigr\}, \quad \a >1,
\end{equation}
with the norms given by
\begin{equation}\label{norm}
  \lVert k\rVert_\a :=\esssup_{\eta\in\Ga_0}|k(\eta)|\a ^{-|\eta|}.
\end{equation}
For the motivation, see, e.g.,\cite{FKK2011a,FKK2014}. It is easy to see, that $\{\K_\a \}_{\a >1}$ satisfies to Assumption~\ref{as:scale} with $\al=1$, $I=(1,\infty)$.

We set, for $\eta\in\Ga_0$,
\[
  (Ak)(\eta) =-E^a(\eta) k(\eta), \qquad
  (Zk)(\eta) =(L^\triangle k)-(Ak)(\eta).
\]

\begin{proposition}\label{existdyn}
The linear mappings $A$ and $Z$ satisfy Assumptions~\ref{as:semgen} and~\ref{as:Ovsgen}, correspondingly.
\end{proposition}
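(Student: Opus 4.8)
The plan is to verify each clause of Assumptions~\ref{as:semgen} and~\ref{as:Ovsgen} separately for the concrete operators $A$ and $Z$ on the scale $\{\K_\a\}_{\a>1}$.

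For Assumption~\ref{as:semgen}, the operator $(Ak)(\eta)=-E^a(\eta)k(\eta)$ is a multiplication operator by the nonnegative (measurable) function $-E^a(\eta)$ on $\Ga_0$. On an $L^\infty$-type space such a multiplication operator is the generator of a $C_0$-semigroup only on a suitable subspace: the natural candidate is $C_\a:=\overline{\Bbs}^{\lVert\cdot\rVert_\a}$, the $\lVert\cdot\rVert_\a$-closure of the set of bounded functions with bounded support, and $D_\a:=\{k\in C_\a \mid E^a k\in C_\a\}$ (or, more conveniently, $D_\a:=\{k\in \K_\a : E^a(\cdot)k(\cdot)\a^{-|\cdot|}\in C_\a\}$; one should pick whichever makes the density argument cleanest). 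First I would recall that $E^a(\eta)\le \bar a\,\binom{|\eta|}{2}\le \bar a\,|\eta|^2/2$ grows polynomially in $|\eta|$, so that the weight $\a^{-|\eta|}$ absorbs it: for fixed $\a$, $S_\a(t)k(\eta):=e^{-tE^a(\eta)}k(\eta)$ satisfies $\lVert S_\a(t)\rVert\le 1$ because $E^a\ge0$, which immediately gives \eqref{uniqbdd} with $\nu=1$, $\omega=0$, uniformly in $\a$. The strong continuity on $C_\a$ follows from dominated convergence on the generating set $\Bbs$ (where $E^a$ is bounded, so $e^{-tE^a}\to1$ uniformly) and then by density and the uniform bound $\le1$. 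The invariance and consistency conditions $B_{\a'}\subset D_\a$, $S_\a(t)\!\upharpoonright_{B_{\a'}}=S_{\a'}(t)$ for $\a'<\a$ are the genuinely new points: here one uses that the multiplier $e^{-tE^a(\eta)}$ does not depend on $\a$, so the only issue is whether $k\in\K_{\a'}$ lies in $D_\a$; since for $\a'<\a$ one has $E^a(\eta)\a^{-|\eta|}\le E^a(\eta)(\a'/\a)^{|\eta|}\a'^{-|\eta|}$ and $(\a'/\a)^{|\eta|}$ decays exponentially while $E^a$ grows polynomially, the product $E^a(\eta)k(\eta)\a^{-|\eta|}$ is bounded whenever $k\in\K_{\a'}$, hence $\K_{\a'}\subset D_\a$; and then $S_\a(t)k=S_{\a'}(t)k$ pointwise is trivial. (One should be a little careful that $\K_{\a'}\subset C_\a$, i.e. that an element of the smaller space is $\lVert\cdot\rVert_\a$-approximable by $\Bbs$-functions; this is where truncation in $|\eta|$ and in the spatial localisation is used, together with the exponential gain from $\a'<\a$.)

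For Assumption~\ref{as:Ovsgen}, I would estimate $\lVert Zk\rVert_{\a''}$ term by term using the explicit formula \eqref{Ltr} minus the $-E^a k$ part, i.e.
\[
(Zk)(\eta)=-\sum_{y\in\eta}\int_\X a(x-y)k(\eta\cup x)\,dx
-m\sum_{x\in\eta}e^{-E^\phi(x,\eta\setminus x)}\int_{\Ga_0}k(\eta\cup\xi)e_\la(e^{-\phi(x-\cdot)}-1,\xi)\,d\xi
+\la\sum_{x\in\eta}k(\eta\setminus x).
\]
The standard Ovsyannikov-type bookkeeping on these spaces (cf.\ \cite{FK2009,FKKZ2014}) gives: the first sum is bounded in $\lVert\cdot\rVert_{\a''}$ by $\langle a\rangle\,|\eta|\,\a''\,\esssup|k(\eta\cup x)|\a''^{-|\eta|-1}\cdot\a''$; using $|\eta|\,(\a'/\a'')^{|\eta|}\le \frac{1}{e\log(\a''/\a')}\le\frac{\a''}{e(\a''-\a')}$ (the elementary inequality $x e^{-cx}\le 1/(ec)$ with the further bound $\log(\a''/\a')\ge(\a''-\a')/\a''$) produces the $\frac{1}{\a''-\a'}$ singularity with a constant proportional to $\langle a\rangle\,\a''/e$. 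The last sum $\la\sum_{x\in\eta}k(\eta\setminus x)$ is handled likewise and also yields an $M/(\a''-\a')$ contribution (with constant $\propto\la$), since removing a point costs a factor $\a''/(\a''-\a')$ in the same way. The middle ($m$-)term is the one that is merely \emph{bounded} (order-zero) rather than singular: since $\lvert e^{-\phi(x-u)}-1\rvert\le1$ and $e^{-\phi(x-u)}-1$ is supported where $\phi(x-u)>0$ with $\int_\X\lvert e^{-\phi(x-u)}-1\rvert\,du\le\langle\phi\rangle$ (using $1-e^{-t}\le t$), one integrates $e_\la$ over $\xi\in\Ga_0$ to get a factor $\exp(\langle\phi\rangle\,\a'')$ times $\esssup|k(\eta\cup\xi)|\a''^{-|\eta|}$; because every extra point of $\xi$ contributes $\a''$ via $\a''^{|\xi|}$ but the kernel supplies the convergent Lebesgue–Poisson integral, the result is $m\,e^{\langle\phi\rangle\a''}\,|\eta|\cdot\dots$ — wait, more carefully one still has the $\sum_{x\in\eta}$ giving an $|\eta|$, so this term too could be made singular, but it suffices (and matches \eqref{singnorm}) to bound it by $N(\a'')\lVert k\rVert_{\a'}$ after absorbing $|\eta|(\a'/\a'')^{|\eta|}\le C$; so I would put it into the $N$-part. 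Collecting, one gets \eqref{singnorm} with $M(\a^*)$ proportional to $(\langle a\rangle+\la)\a^*$ and $N(\a^*)$ proportional to $m\,\a^*\,e^{\langle\phi\rangle\a^*}$ (plus lower-order pieces), both increasing and continuous in $\a^*$, as required; and the boundedness of $Z:\K_{\a'}\to\K_{\a''}$ for $\a'<\a''\le\a^*$ is exactly what these estimates assert.

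The main obstacle is the first part — constructing the subspaces $C_\a\subset\K_\a$ on which $(A,D_\a)$ genuinely generates a $C_0$-semigroup (recall, by \cite{Lot1985}, there is no $C_0$-semigroup with unbounded generator on all of an $L^\infty$-type space, which is precisely why Assumption~\ref{as:semgen} is phrased through the sun-dual subspaces $C_\a$), and checking the compatibility conditions $B_{\a'}\subset D_\a$ and $S_\a(t)\!\upharpoonright_{\K_{\a'}}=S_{\a'}(t)$ across the scale. Everything else is the routine Ovsyannikov norm estimate. Concretely I would: (i) define $C_\a$ and $D_\a$ explicitly as above and verify $\overline{D_\a}=C_\a=\overline{C_\a}$ using that $\Bbs\subset D_\a$ and $\Bbs$ is $\lVert\cdot\rVert_\a$-dense in $C_\a$ by definition; (ii) check $S_\a(t)$ is a contraction $C_0$-semigroup on $C_\a$ with generator $(A,D_\a)$, via the Hille–Yosida-type verification (or directly, since $S_\a(t)=e^{-tE^a(\cdot)}$ is an explicit contraction multiplier and differentiability on $D_\a$ is pointwise plus a dominated-convergence argument); (iii) verify the cross-scale invariance/consistency by the exponential-gain estimate $E^a(\eta)(\a'/\a)^{|\eta|}\le \mathrm{const}$; (iv) do the four-term norm estimate for $Z$ to read off $M,N$. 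Once these are in place the proposition follows.
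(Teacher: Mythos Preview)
Your outline for the $Z$-estimate is essentially sound, but there are two genuine problems, one structural and one computational.

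\medskip
\textbf{The choice of $C_\a$.} You propose $C_\a:=\overline{\Bbs}^{\lVert\cdot\rVert_\a}$. This space is too small: it does \emph{not} contain $\K_{\a'}$ for any $\a'<\a$, so the crucial inclusion $B_{\a'}\subset D_\a\subset C_\a$ of Assumption~\ref{as:semgen} fails. Concretely, take $k\equiv 1\in\K_{\a'}$. For any $G\in\Bbs$ the function $G^{(1)}$ has compact support in $\X$, so
\[
\lVert k-G\rVert_\a\ \ge\ \a^{-1}\esssup_{x\in\X}\bigl|1-G^{(1)}(x)\bigr|\ \ge\ \a^{-1}>0,
\]
and $k\notin\overline{\Bbs}^{\lVert\cdot\rVert_\a}$. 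The ``spatial localisation'' you invoke cannot work in an $L^\infty$-type norm; the exponential gain from $\a'<\a$ only handles truncation in $|\eta|$. The paper circumvents this by the sun-dual ($\odot$-dual) construction: one first notes that the same multiplier semigroup $e^{-tE^a(\cdot)}$ is a genuine $C_0$-semigroup on the predual $\L_\a=L^1(\Ga_0,\a^{|\eta|}d\eta)$; its adjoint on $\K_\a$ is the given $S(t)$, and the subspace of strong continuity is $C_\a:=\overline{\mathcal{D}_\a}$ with $\mathcal{D}_\a=\{k\in\K_\a:E^a k\in\K_\a\}$. This $C_\a$ is much larger than $\overline{\Bbs}$ (for instance $k\equiv1\in\mathcal{D}_\a$ since $|\eta|^2\a^{-|\eta|}$ is bounded), and the inclusion $\K_{\a'}\subset\mathcal{D}_\a\subset C_\a$ follows immediately from the polynomial-versus-exponential estimate you already wrote down. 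You identified the right obstacle (Lotz's theorem forces one onto a subspace), but picked the wrong subspace.

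\medskip
\textbf{The $m$-term cannot go into $N$.} You try to absorb the $m$-term into the non-singular part $N(\a^*)$ by writing ``$|\eta|(\a'/\a'')^{|\eta|}\le C$''. But $\sup_{n\ge 0} n\,q^{\,n}=1/(e\,|\log q|)$, which blows up as $\a''\downarrow\a'$; since $N$ in \eqref{singnorm} is allowed to depend only on $\a^*$ and not on the gap $\a''-\a'$, this placement is illegitimate. The $m$-term carries the same $|\eta|$ factor as the other two and therefore contributes to $M$, not $N$. In the paper (via \cite[Proposition~3.2]{FKKZ2014}) one obtains
\[
M(\a^*)=\tfrac{1}{e}\bigl(\langle a\rangle(\a^*)^2+\a^* m\,e^{\langle\phi\rangle\a^*}+\a^*\la\bigr),
\]
with all three pieces singular of order $(\a''-\a')^{-1}$. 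This is easily fixed in your argument; the first gap is the substantive one.
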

\begin{proof}
  The operator $A$ with the maximal domain
  \begin{equation}\label{maxdom}
    \mathcal{D}_\a:=\bigl\{k\in\K_\a \bigm| Ek\in\K_\a \bigr\},
 \end{equation}
 naturally, generates the semigroup
  \begin{equation}\label{semigr}
    (S(t)k)(\eta)=e^{-tE(\eta)}k(\eta),\quad\eta\in\Ga_0,
  \end{equation}
  in any $\K_\a $, $\a >1$. However, this semigroup is not a $C_0$ one. Indeed, for $k_\a (\eta):=\a^{-|\eta|}$, one has
  \[
  \lVert S(t)k_\a - k_\a \rVert_\a =\esssup_{\eta\in\Ga_0}\bigl\lvert e^{-tE(\eta)}-1\bigr\rvert=1\not\to0, \quad t\to0.
  \]
  Therefore, one should use the technique of the $\odot$-dual semigroups; for details see, e.g., \cite{vNee1992,EN2000}. Namely, we consider the $C _0$-semigroup given by the same expression \eqref{semigr}, but considered in the space $\L_\a := L^1(\Ga_0, \a ^{|\eta|}\,d\eta)$. Then $S(t)$ is dual to that semigroup in the dual space $\K_\a $ (where duality is realized by \eqref{duality}). Then, the space $C_\a:=\overline{\mathcal{D}_\a}$ (the closure is in the norm of $\K_\a$) is $S(t)$-invariant and the restriction $S_\a(t):=S(t)\upharpoonright_{C_\a}$ consists a $C_0$-semigroup there. The generator of $S_\a(t)$ will be the part of $A$, i.e. $(A,D_\a)$, where $D_\a=\{k\in C_\a \mid Ak\in C_\a\}$, cf.~\cite{FKK2011a}. Hence $D_\a$ is $S(t)$-invariant as well. It should be stressed also that $\K_{\a'}\subset D_\a$, for any $1<\a'<\a$. Indeed, for a $k\in\K_{\a'}$,
\begin{equation}\label{sqest}
   \a^{-|\eta|}E^a(\eta)|k(\eta)|\leq \lVert k\rVert_{\a'} \bar a |\eta|^2\Bigl(\frac{\a'}{\a}\Bigr)^{|\eta|}\leq\frac{4\lVert k\rVert_{\a'} \bar a }{e^2\ln^2\frac{\a'}{\a} },
\end{equation}
  where we used that $\sup_{r>0}r^2q^r=4/(e\ln q)^2$, for $q\in(0,1)$. Since $|S(t)k|\leq |k|$ pointwise, the space $\K_{\a'}$ is also $S(t)$-invariant. From these arguments we easily get that $A$ satisfies Assumption~\ref{as:semgen} with $\nu=1$, $\omega=0$.

  Next, let us denote
  \[
  (Z^{(1)}k)(\eta):=-\sum_{y\in\eta} \int_\X a(x-y)k(\eta\cup x)\,dx.
  \]
  Then, for any $1<\a'<\a''<\a^*$ and for any $k\in\K_{\a'}$,
  \[
  (\a'')^{-|\eta|}|(Z^{(1)}k)(\eta)|\leq \langle a\rangle \a' \lVert k\rVert_{\a'}|\eta| \Bigl(\frac{\a'}{\a''}\Bigr)^{|\eta|}\leq \langle a\rangle \a' \lVert k\rVert_{\a'}
  \frac{1}{-e\ln\frac{\a'}{\a''}}\leq
  \frac{\langle a\rangle (\a^*)^2 \lVert k\rVert_{\a'}}{e(\a''-\a')},
  \]
  where we used that $\sup_{r>0}rq^r=1/(e\ln q)$, for $q\in(0,1)$, and $\ln \a''-\ln\a' =\frac{1}{\tilde{\a}}(c''-c')$, for some $\tilde{\a}\in(\a',\a'')$.

  The similar estimate for $ (Z^{(2)}k)(\eta):= (Zk)(\eta)- (Z^{(1)}k)(\eta)$ was obtained in \cite[Proposition~3.2]{FKKZ2014}. Combining these results, one gets that $Z$ satisfies Assumption~\ref{as:Ovsgen}, with
  \[
  M(\a^*)=\frac{1}{e}\bigl( \langle a\rangle (\a^*)^2 +\a^* me^{\langle\phi\rangle\a^*}+\a^*\la\bigr).
  \]
  (To be more precise, we used here the estimate $\int_\X (1-e^{-\phi(x)})\,dx\leq \langle\phi\rangle$, to simplify the expression from \cite[Proposition~3.2]{FKKZ2014}.)
\end{proof}

Consider now the so-called Vlasov scaling of the dynamics above, see, e.g., \cite{FKK2010a,FKK2011a}. Namely, for an $\eps>0$, we denote by $L_\eps$ the operator \eqref{Mgen} with $\eps a(\cdot)$, $\eps\phi(\cdot)$, $\eps^{-1}\la$ in place of $a(\cdot)$, $\phi(\cdot)$, $\la$, respectively. Then, one can construct $L_\eps^\triangle$ in the same way as above. We~set~also
\begin{equation}\label{Lren}
  (L^\triangle_{\eps,\mathrm{ren}}k)(\eta):=\eps^{|\eta|}L^\triangle_{\eps} \eps^{-|\eta|} k(\eta).
\end{equation}
Directly from \eqref{Ltr}, one gets
\begin{equation}\label{Ltrren}
  \begin{split}
  (L^\triangle_{\eps,\mathrm{ren}} k)(\eta)=&-\eps E^a(\eta)k(\eta)-\sum_{y\in\eta} \int_\X a(x-y)k(\eta\cup x)\,dx \\&-m\sum_{x\in \eta
}e^{-\eps E^{\phi }\left( x,\eta \setminus x\right) }\int_{\Gamma _{0}}k\left(
\eta \cup \xi \right) e_{\lambda }\Bigl( \frac{e^{-\eps\phi \left( x-\cdot \right)
}-1}{\eps},\xi \Bigr) d \xi \\& +\la\sum_{x\in\eta}k(\eta\setminus x), \quad \eta\in\Ga_0.
\end{split}
\end{equation}

We denote, for $\eps>0$,
\[
  (A_\eps k)(\eta):=-\eps E^a(\eta)k(\eta), \qquad
  (Z_\eps k)(\eta):=(L^\triangle_{\eps,\mathrm{ren}} k)(\eta)- (A_\eps k)(\eta),
\]
and we set, naturally, $(A_0 k)(\eta):=0$ and
\begin{align*}
(Z_0 k)(\eta):=&-\sum_{y\in\eta} \int_\X a(x-y)k(\eta\cup x)\,dx \\&-m\sum_{x\in \eta
}\int_{\Gamma _{0}}k\left(
\eta \cup \xi \right) e_{\lambda }\bigl( -\phi \left( x-\cdot \right),\xi \bigr) d \xi +\la\sum_{x\in\eta}k(\eta\setminus x).
\end{align*}

\begin{proposition}
The linear mappings $A_\eps$ and $Z_\eps$, $\eps\geq0$ satisfy Assumptions~\ref{as:semgen-eps} and~\ref{as:Ovsgen-eps}, correspondingly. Moreover, the conditions \eqref{convsg}, \eqref{convZ}, \eqref{seq0} of Theorem~\ref{thm:conv} hold.
\end{proposition}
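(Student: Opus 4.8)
The plan is to verify each of the three Assumptions (\ref{as:semgen-eps}, \ref{as:Ovsgen-eps}) and the three convergence conditions (\ref{convsg}, \ref{convZ}, \ref{seq0}) in turn, reusing as much of the proof of Proposition~\ref{existdyn} as possible, since the structure of $A_\eps$, $Z_\eps$ for $\eps>0$ is entirely parallel to that of $A$, $Z$ (one only has the extra factor $\eps$ in the energy $E^a$, and renormalised weights inside the exponentials). First I would treat the semigroup part. For each $\eps>0$, the operator $A_\eps = \eps A$ generates, on $\L_\a$ and hence by $\odot$-duality on $\K_\a$, the multiplication semigroup $(S_{\a,\eps}(t)k)(\eta)=e^{-t\eps E^a(\eta)}k(\eta)$; the same argument as in Proposition~\ref{existdyn} (using $|S_{\a,\eps}(t)k|\leq|k|$ pointwise and the estimate \eqref{sqest}, which is $\eps$-uniform since the weight only helps) shows $C_{\a}=\overline{\mathcal D_\a}$ is invariant, $\K_{\a'}\subset D_\a$ for $\a'<\a$, the restriction is a $C_0$-semigroup with generator the part of $A_\eps$, and \eqref{uniqbdd-eps} holds with $\nu=1$, $\omega=0$ uniformly in $\eps\geq0$ (for $\eps=0$ the semigroup is the identity). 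The required compatibility $S_{\a,\eps}(t)\upharpoonright_{\K_{\a'}}=S_{\a',\eps}(t)$ is immediate because the multiplication operator does not depend on $\a$. Next, for Assumption~\ref{as:Ovsgen-eps} I would bound $Z_\eps$ term by term exactly as in Proposition~\ref{existdyn}: the $Z^{(1)}$-type term (the $a$-integral) gives the $(\a''-\a')^{-1}$ singularity with the same constant $\langle a\rangle(\a^*)^2/e$, independently of $\eps$; the $Z^{(2)}$-type term (the $m$-sum and the $\la$-sum) is estimated as in \cite[Proposition~3.2]{FKKZ2014}, where the renormalised weight $e_\la\bigl((e^{-\eps\phi}-1)/\eps,\xi\bigr)$ satisfies $\bigl|(e^{-\eps\phi(x)}-1)/\eps\bigr|\leq\phi(x)$ for all $\eps>0$ (and $=\phi(x)$ in the limit), so the same $M(\a^*)=\frac1e(\langle a\rangle(\a^*)^2+\a^* m e^{\langle\phi\rangle\a^*}+\a^*\la)$ works uniformly in $\eps\geq0$; one takes $N\equiv 0$.

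For the convergence conditions: \eqref{seq0} holds trivially once $p_\eps$, $P_\eps$ are identified below. For \eqref{convsg}, I would write $S_{\a'',\eps}(t)k-S_{\a'',0}(t)k = (e^{-t\eps E^a(\eta)}-1)k(\eta)$ and use $|e^{-t\eps E^a(\eta)}-1|\leq t\eps E^a(\eta)$; multiplying by $(\a'')^{-|\eta|}$ and invoking the same $r^2q^r$-bound as in \eqref{sqest} with $q=\a'/\a''$ gives $\|S_{\a'',\eps}(t)k-S_{\a'',0}(t)k\|_{\a''}\leq \eps\,t\,\bar a\,\frac{4}{(e\ln(\a''/\a'))^2}\|k\|_{\a'}$, so one may take $p_\eps(\a^*)$ of the form $c\,\eps$ (absorbing the $\ln$ into a crude $(\a''-\a')^{-2}$ bound if one wants the statement literally in the form \eqref{convsg}, or simply noting that the $\a^*$-dependence on the right is harmless after the usual intermediate-$\a$ trick in the proof of Theorem~\ref{thm:conv}); the factor $t\leq s+T$ and $e^{\omega t}=1$ are absorbed into the constant. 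For \eqref{convZ}, I would compare $Z_\eps$ with $Z_0$ termwise: the $a$-integral term is identical in $Z_\eps$ and $Z_0$, so contributes nothing; in the $m$-term the difference comes from $e^{-\eps E^\phi(x,\eta\setminus x)}-1$ (bounded by $\eps E^\phi\leq\eps\bar\phi|\eta|^2$, or better $\eps\langle$...$\rangle$-type bounds) and from $e_\la\bigl((e^{-\eps\phi}-1)/\eps,\xi\bigr)-e_\la(-\phi,\xi)$, each factor-difference being $O(\eps)$ pointwise with $\bigl|(e^{-\eps\phi}-1)/\eps+\phi\bigr|\leq\tfrac{\eps}{2}\phi^2$; expanding the product over $\xi$ telescopically, integrating against $d\xi$ and multiplying by $\a^{-|\eta|}$ reproduces exactly the same kind of sums already controlled in \cite[Proposition~3.2]{FKKZ2014}, now with an overall prefactor $O(\eps)$ and at worst a few extra powers of $|\eta|$, which translate into finitely many negative powers of $\a''-\a'$ by the elementary $\sup_r r^j q^r$ bounds. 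Thus \eqref{convZ} holds with $P_\eps(\a^*)=C(\a^*)\eps$ and some finite $r$ (the number of $\ln^{-1}$-type factors generated, e.g. $r=3$ suffices). Collecting, $p_\eps(\a)=P_\eps(\a)=C(\a)\eps\to0$, which is \eqref{seq0}.

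The only genuinely non-routine point is bookkeeping the telescoping estimate for $e_\la$ in the $m$-term of \eqref{convZ}: one must factor
\[
e_\la(f_\eps,\xi)-e_\la(f_0,\xi)=\sum_{x\in\xi}\Bigl(\prod_{\substack{y\in\xi\\ \text{before }x}}f_\eps(y)\Bigr)\bigl(f_\eps(x)-f_0(x)\bigr)\Bigl(\prod_{\substack{y\in\xi\\ \text{after }x}}f_0(y)\Bigr),
\]
with $f_\eps=(e^{-\eps\phi(x-\cdot)}-1)/\eps$, $f_0=-\phi(x-\cdot)$, then integrate $|k(\eta\cup\xi)|$ against $\prod|f_\sharp|\leq\prod\phi$ over $\Ga_0$, which is where the factor $e^{\langle\phi\rangle\a^*}$ (and the constant $C(\a^*)$) comes from; the difference term $|f_\eps(x)-f_0(x)|\leq\tfrac\eps2\phi(x-\cdot)^2$ supplies the $\eps$. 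This is morally identical to the estimate already done in \cite{FKKZ2014} for $Z$ itself, so no new obstacle arises — one simply carries the extra $\eps$ and checks it does not interfere with integrability, which it does not since $\phi\in L^1\cap L^\infty$. Therefore all hypotheses of Theorem~\ref{thm:conv} are met and the proposition follows.
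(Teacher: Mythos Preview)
Your proposal is correct and follows essentially the same route as the paper: reuse Proposition~\ref{existdyn} verbatim for $A_\eps$, reduce the $Z_\eps$-bound to the $\eps$-independent one via $|(e^{-\eps\phi}-1)/\eps|\le\phi$, handle \eqref{convsg} by the mean-value inequality $|e^{-t\eps E^a}-1|\le t\eps E^a$ together with \eqref{sqest}, and treat \eqref{convZ} by telescoping the $m$-term. The paper simply cites \cite[Propositions~4.2 and~4.6]{FKKZ2014} for the last two steps (obtaining $r=2$ rather than your $r=3$, which is harmless), whereas you sketch the argument; conversely, you are more explicit than the paper about the $(\a''-\a')$-dependence hidden in the \eqref{convsg} estimate, which both proofs ultimately absorb into the partition scheme of Theorem~\ref{thm:conv}.
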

\begin{proof}
The operators $A_\eps$ satisfy Assumption~\ref{as:semgen-eps} by the same arguments as in the proof of Proposition~\ref{existdyn} (independently on $\eps\geq0$). The operators $Z_\eps$ satisfy Assumption~\ref{as:Ovsgen-eps} by the estimation for $Z^{(1)}$ in the proof of Proposition~\ref{existdyn} and by \cite[Proposition~4.2]{FKKZ2014} for $Z_\eps-Z^{(1)}$, $\eps\geq0$.

Next, in the notations of Theorem~\ref{thm:conv}, for any $1<\a'<\a''<\a^*$ and for any $k\in\a'$,
\begin{align*}
\lVert S_\eps(t) k - S_0 (t)k\rVert_{\a''}&=\esssup_{\eta\in\Ga_0} \ (\a'')^{-|\eta|} \bigl\lvert e^{-t\eps  E^a(\eta)}-1\bigr| |k(\eta)|\\&\leq \lVert k \rVert_{\a'} (s+T) \eps \esssup_{\eta\in\Ga_0} E^a(\eta)\Bigl(\frac{\a'}{\a}\Bigr)^{|\eta|},
\end{align*}
that implies \eqref{convsg}, by using the same estimate as in \eqref{sqest}.

Finally,
\begin{align*}
  (Z_\eps k)(\eta)-(Z_0 k)(\eta)=&-m\sum_{x\in \eta
}e^{-\eps E^{\phi }\left( x,\eta \setminus x\right) }\int_{\Gamma _{0}}k\left(
\eta \cup \xi \right) e_{\lambda }\Bigl( \frac{e^{-\eps\phi \left( x-\cdot \right)
}-1}{\eps},\xi \Bigr) d \xi\\&+m\sum_{x\in \eta
}\int_{\Gamma _{0}}k\left(
\eta \cup \xi \right) e_{\lambda }\bigl( -\phi \left( x-\cdot \right),\xi \bigr) d \xi,
\end{align*}
and the estimate \eqref{convZ} (with $r=2$) was proved in \cite[Proposition~4.6]{FKKZ2014}.
\end{proof}

Suppose now, for simplicity, that $k_{0,\eps}:=k_0\in\K_{\alpha_0}$, $\alpha_0>1$, $\eps>0$. Then, by Theorem~\ref{thm:conv}, the solutions to the equation
\begin{equation}\label{QFP-eps}
  \frac{\partial}{\partial t} k_{t,\eps}(\eta) = (L^\triangle_{\eps,\mathrm{ren}} k_{t,\eps})(\eta), \quad \eta\in\Ga_0, t\in(0,T),
\end{equation}
converges in any $\K_{\a^*}$ with $\a^*>\a_0$ to the solution to the equation
\begin{equation}\label{QFP-V}
  \frac{\partial}{\partial t} k_t(\eta) = (Z_0 k_t)(\eta), \quad \eta\in\Ga_0, \ t\in(0,T),
\end{equation}
uniformly on any $[0,\Tau]\subset(0,T)$, where $T=T(\a_0,\a^*)$.

The limiting equation \eqref{QFP-V} has the following key-property: if $k_0(\eta)=e_\la(\rho_0,\eta)$, for a function $\rho_0\in L^\infty(\X,dx)$ then one can find a (unique) solution to \eqref{QFP-V} of the same form: $k_t(\eta)=e_\la(\rho_t,\eta)$. To show this, note that
\[
\frac{\partial}{\partial t}e_\la(\rho_t,\eta)=\sum_{x\in\eta}\rho_t(x)e_\la(\rho_t,\eta\setminus x)
\]
and
\begin{align*}
(Z_0 e_\la(\rho_t))(\eta):=&-\sum_{x\in\eta} \rho_t(x) \int_\X a(x-y)\rho_t(y)\,dy e_\la(\rho_t,\eta\setminus x) \\&-m\sum_{x\in \eta
}\rho_t(x) e_\la(\rho_t,\eta\setminus x) \int_{\Gamma _{0}}e_\la(\rho_t,\xi)  e_{\lambda }\bigl( -\phi \left( x-\cdot \right),\xi \bigr) d \xi \\&+\la\sum_{x\in\eta}e_\la(\rho_t,\eta\setminus x).
\end{align*}
By \eqref{LebPois}, we have, for any $f\in L^1(\X,dx)$,
\[
\int_{\Ga_0} e_\la(f,\eta)\,d\eta=\exp\Bigl\{\int_\X f(x)\,dx\Bigr\}.
\]
Therefore, $k_t(\eta)=e_\la(\rho_t,\eta)$ indeed solves \eqref{QFP-V}, provided that $\rho_t$ is a unique solution to the following equation
\begin{equation}\label{KE}
  \frac{\partial}{\partial t}\rho_t(x)=-\rho_t(x)(a*\rho_t)(x)-m\rho_t(x)e^{-(\phi*\rho_t)(x)}+\la,
\end{equation}
in the space $L^\infty(\X,dx)$ (at least on $(0,T)$).

The existence and uniqueness of nonnegative solutions to \eqref{KE} may be done using the same approaches as in \cite{FKKZ2014,FKKozK2014}. We will realise this in a sequel paper.

\begin{remark}\label{statpoints}
  It is worth noting that the equation \eqref{KE} may have one or three positive stationary solutions depending on values of the parameters. Indeed, if $\rho_t(x)\equiv\rho>0$ is a stationary solution to \eqref{KE}, then $\la=\langle a\rangle \rho^2+m\rho \exp(-\langle\phi\rangle \rho)$. Denote $x=\langle\phi\rangle \rho>0$, $c=\frac{\la \langle\phi\rangle}{m} $, $b=\frac{\langle a\rangle}{m\langle\phi\rangle} $; then we will get $xe^{-x}+bx^2=c$.
  The function $f(x)=xe^{-x}+bx^2$, $x\geq0$, may have zero or two points of local extremum. Indeed,
  $f'(x)=0$ yields $2bx=(x-1)e^{-x}$. The function $g(x)=(x-1)e^{-x}$, $x\geq0$, has the derivative $g'(x)=(2-x)e^{-x}$ and hence $g$ increases from $-1$ to $e^{-2}$ on $(0,2)$ and decreases for $x>2$. The tangent line to the graph of $y=g(x)$ at a point $(x_0,g(x_0))$ which passes through the origin has the equation $y-g(x_0)=g'(x_0)(x-x_0)$, and thus $x=y=0$ yields
  \[
  -(x_0-1)e^{-x_0}=(2-x_0)e^{-x_0}(-x_0), \quad x_0^2-x_0-1=0,\quad x_0=\frac{1+\sqrt{5}}{2}>0.
  \]
  As a result, if $2b<g'(x_0)$, i.e. if
  \[
  \frac{\langle a\rangle}{m\langle \phi\rangle}<\frac{3-\sqrt{5}}{4}\exp\Bigl(-\frac{1+\sqrt{5}}{2}\Bigr),
  \]
  then the function $f(x)$ has two points of local extremum and, therefore, there exists $\la$ (and thus $c$) such that the equation $f(x)=c$ has three solutions. For $2b\geq g'(x_0)$, it will have one solution only, for any $\la>0$.
\end{remark}

\end{document}